\newtheorem{thm}{Theorem}[section]
\newtheorem{cor}[thm]{Corollary}
\newtheorem{lem}[thm]{Lemma}
\newtheorem{prop}[thm]{Proposition}
\theoremstyle{definition}
\newtheorem{defn}{Definition}[section]
\theoremstyle{remark}
\theoremstyle{example}
\numberwithin{equation}{section}
\newcommand{\tr}{\mathrm{tr}}
\newcommand{\mn}{\sqrt{-1}}
\newcommand{\dbar}{\overline{\partial}}
\newcommand{\ddbar}{\sqrt{-1}\partial\overline{\partial}}
\newcommand{\md}{\mathrm{d}}
\begin{document}

\title{Higher dimensional generalizations of twistor spaces}
\makeatletter
\let\uppercasenonmath\@gobble
\let\MakeUppercase\relax
\let\scshape\relax
\makeatother
\author{Hai Lin${}^{1}$, Tao Zheng${}^{2}$ }
\address{${}^{1}$ Yau Mathematical Sciences Center, Tsinghua University, Beijing 100084, P. R. China}
\email{hailin@mail.tsinghua.edu.cn}
\address{${}^{2}$ Institut Fourier, Universit\'{e} Grenoble Alpes, 100 rue des maths,
Gi\`{e}res 38610, France}
\email{zhengtao08@amss.ac.cn}

\begin{abstract}
We construct a generalization of twistor spaces of hypercomplex manifolds and hyper-K\"{a}hler manifolds $M$, by generalizing the twistor $\mathbb{P}^{1}$ to a more general complex manifold $Q$. The resulting manifold $X$ is complex if and only if $Q$ admits a holomorphic map to $\mathbb{P}^1$. We make
branched double covers of these manifolds. Some class of these branched double covers can give rise to non-K\"{a}hler Calabi-Yau manifolds. We show that these manifolds $X$ and their branched double covers are non-K\"{a}hler. In the cases that $Q$ is a balanced manifold, the resulting manifold $X$ and its special branched double cover have balanced Hermitian metrics.
\end{abstract}
\maketitle
\section{\textbf{Introduction}}

\label{sec_introduction}

Non-K\"{a}hler geometries exist in both heterotic string theory and type II
string theory, in the presence of fluxes. In the compactification of
heterotic string theory to four dimensional Minkowski spacetime \cite%
{Candelas:1985en, Witten:1985bz, Strominger:1986uh}, the internal
six-manifolds can become non-K\"{a}hler in the presence of fluxes. Various
models of constructing heterotic manifolds and their vector-bundles have
been put forward, see for example \cite{Strominger:1986uh,Becker:2006et,Fu:2006vj,Fu:2008ga,Becker:2009df,Andreas:2010cv,Melnikov:2014ywa,Anderson:2014xha,delaOssa:2014lma,Anderson:2015iia}.
They play an important role in searching for realistic
string theory vacua with four dimensional Minkowski spacetime. The non-K\"{a}%
hler manifolds and balanced manifolds can also occur in type II string theory,
in the presence of three-form fluxes and five-brane sources. For example,
they have appeared in the context of eight-dimensional Hermitian manifolds in type IIB string theory, see \cite{Lin:2016lwe, Minasian:2016txd, Prins:2013koa, Rosa:2013lwa}.

An interesting type of non-K\"{a}hler manifolds are the balanced Hermitian manifolds (see \cite{Mic82}%
). They are Hermitian manifolds with a Hermitian form $\omega $ and a
holomorphic form. For a non-K\"{a}hler balanced manifold, its Hermitian form
$\omega $ is not closed, however, $\omega^{p-1}$ is closed, where $p$ is
the complex dimension of the manifold. Under appropriate blowing-downs or
contractions of curves, some classes of non-K\"{a}hler balanced manifolds
can become K\"{a}hler and have projective models in algebraic geometry (see
for example \cite{Poon86, Lebrun Poon, Lin:2014lya}).

Twistor spaces \cite{Penrose:1967wn, AHS78} provide an important type of non-K\"{a}hler manifolds. Given an
oriented Riemannian four-manifold $M$, there is an associated twistor space of $M$, sometimes denoted as $\mathrm{Tw}(M)$. The construction of the twistor spaces uses a special twistor $\mathbb{P}^{1}$. The twistor $\mathbb{P}^{1}$ parametrizes the set of almost complex structures of the associated twistor space \cite{AHS78}.

There are several classes of manifolds whose twistor spaces are of great
interest. One class of manifolds are the four-manifolds with self-dual
conformal structure \cite{Poon86,Ped Poon,AHS78}, such as the connected sum
of $n$ copies of $\mathbb{P}^{2}$s, i.e., $n\mathbb{P}^{2}$. Their twistor
spaces are complex (see for example \cite{Lebrun Poon,Poon,Lebrun}). For the
simplest cases $n=0$ and $1$ respectively, the manifolds are $S^{4}$ and $%
\mathbb{P}^{2}$ respectively, and their twistor spaces are $\mathbb{P}^{3}$ and
the flag manifold $\mathbb{F}_{1,2}$ respectively, which are K\"{a}hler \cite%
{Hitchin}. For the cases $n\geq 2$, the twistor spaces $\mathrm{Tw}(n\mathbb{P}%
^{2})$ are non-K\"{a}hler manifolds (see \cite{Lebrun Poon,Poon,Lebrun}).
The branched double covers of the twistor spaces were analyzed in \cite%
{Lin:2014lya,Heckman:2013sfa}, in which by choosing appropriate
branch divisors, the branched double covers can give rise to non-K\"{a}hler
Calabi-Yau manifolds, with trivial canonical bundle. They provide
interesting examples of non-K\"{a}hler Calabi-Yau manifolds (see for example
\cite{Tosatti, Tseng:2012wca, Lin:2014lya}). We can also
construct various vector bundles on them \cite{Lin:2014lya}.

Another class of manifolds whose twistor spaces are very interesting are the
hyper-K\"{a}hler and hypercomplex manifolds. Hypercomplex manifolds are
complex manifolds with a two-sphere's worth of complex structures satisfying quaternionic relations (see for
example \cite{Boyer, Spindel etal, Joyce, Salamon}). They are
generalizations of the hyper-K\"{a}hler manifolds, but their associated
Hermitian forms are not closed. The hyper-K\"{a}hler manifold is a special
case of the hypercomplex manifold when it has a K\"{a}hler structure. Their
twistor spaces \cite{Salamon01,KV98} are also complex manifolds, equipped with balanced metrics
(see for example \cite{Kaledin98, KV98, Tomberg}). There are also twistor
spaces of various other manifolds, see for example \cite{Alekseevsky
etal,PP,Howe:1996kj} and the references therein.

In this paper, we construct a generalization of the twistor spaces of
hypercomplex manifolds and hyper-K\"{a}hler manifolds, by generalizing the
twistor $\mathbb{P}^{1}$ to a more general complex manifold $Q$, to obtain a
higher dimensional manifold $X$. We will also make branched double coverings of
these manifolds $X$, branched along appropriate divisors. Some of these
branched double covers produce non-K\"{a}hler Calabi-Yau manifolds. In the
cases that $Q$ is a balanced manifold, the resulting manifold $X$ and its special
branched double cover have balanced metrics.

The organization of this paper is as follows. In Section \ref{sec_ordinary
twistor space}, we give a preliminary account of the hypercomplex and hyper-K\"{a}hler manifolds, and their twistor spaces. In Section \ref{sec_higher
dimension construction}, we present a generalization of the twistor spaces of
hypercomplex manifolds and hyper-K\"{a}hler manifolds $M$, by changing the
twistor $\mathbb{P}^{1}$ to a general complex manifold $Q$. Then in Section \ref{sec_higher dimension construction branch cover}, we make branched
double covers of these manifolds $X$, branching along appropriate divisors.
Some of these branched double covers can generate non-K\"{a}hler Calabi-Yau
manifolds. In Sections \ref{sec_a basic lemma} and \ref{sec_positivity}, we describe a useful Lemma and discuss positivity methods that will be useful for showing the Hermitian metrics of these manifolds that we construct. In Sections \ref{sec_balanced metric_higher_analog_hyper-Kahler} and \ref{sec_balanced metric_higher_analog_hypercomplex}, we show
that the resulting manifold $X$ and its special branched double cover have balanced Hermitian metrics, if in addition $Q$ is a balanced manifold. In Section \ref{sec_non-Kahlerity}, we show that these manifolds $X$ and their branched double covers are non-K\"{a}hler. Finally in Section \ref{sec_discussion} we
briefly make conclusions and discuss some related directions.

\noindent {\bf Acknowledgements}
We would like to thank T. Fei, J. Heckman, I. Melnikov, R. Minasian, A. Patel, D.
Prins, D. Rosa, J. Simon, V. Tosatti, B. Wu, S.-T. Yau, and J. Zhang for
discussions and communications. The work of HL was supported in part by NSF grant
DMS-1159412, NSF grant PHY-0937443 and NSF grant DMS-0804454, and in part by
YMSC, Tsinghua University. The work of TZ was supported by National Natural Science Foundation of China grant No. 11401023, and his post-doc position is supported by the ERC grant ALKAGE. The authors are also grateful to the anonymous referees and the editor for their careful reading and helpful suggestions which greatly improved the paper.

\section{\textbf{Higher dimensional analogs of twistor spaces}}
\label{sec_higher dimension}
\subsection{Twistor space of hyper-K\"ahler and hypercomplex manifolds}
\label{sec_ordinary twistor space}

Let us first give preliminary accounts of hyper-K\"{a}hler and hypercomplex manifolds and their twistor spaces.

Let $(M,\,\mathbf{I},\mathbf{J},\mathbf{K},g)$ be a
hypercomplex manifold with $\dim_{\mathbb{C}}M=r=2k$, where $\mathbf{I},\mathbf{J},\mathbf{K}$ are the
complex structures: $TM\longrightarrow TM$,~with relations
\begin{equation*}
\mathbf{I}^{2}=\mathbf{J}^{2}=\mathbf{K}^{2}=-1,\;\mathbf{I}\mathbf{J}=
\mathbf{K},\;\mathbf{J}\mathbf{K}=\mathbf{I},\;\mathbf{K}\mathbf{I}=\mathbf{J}
\end{equation*}
and $g$ is the Riemannian metric compatible with $\mathbf{I},\mathbf{J},
\mathbf{K}$. Then we can define real two-forms
\begin{align*}
\omega _{\mathbf{I}}(X,Y):=&g(\mathbf{I}X,\,Y),\quad \omega _{\mathbf{J}}(X,Y):=g(
\mathbf{J}X,\,Y),\\
 \omega _{\mathbf{K}}(X,Y):=&g(\mathbf{K}X,\,Y),\quad
\forall \;X,\,Y\in \mathfrak{X}(M),
\end{align*}
where $\mathfrak{X}(M)$ is the space of vector fields on $M$. Let
\begin{equation*}
e_{1},\cdots ,e_{k},\,\mathbf{I}e_{1},\cdots ,\mathbf{I}e_{k},\,\mathbf{J}
e_{1},\cdots ,\mathbf{J}e_{k},\,\mathbf{K}e_{1},\cdots ,\mathbf{K}e_{k}
\end{equation*}
be the orthonormal basis for $M$. Then
\begin{equation*}
e^{1},\cdots ,e^{k},\,-\mathbf{I}e^{1},\cdots ,-\mathbf{I}e^{k},\,-\mathbf{J}
e^{1},\cdots ,-\mathbf{J}e^{k},\,-\mathbf{K}e^{1},\cdots ,-\mathbf{K}e^{k}
\end{equation*}
is the dual basis. Here we remark that, for example, $\mathbf{I}\mathbf{J}e^1=-\mathbf{K}e^1$ by the extended definition $(\mathbf{I}e^i)(e_k)=e^i(\mathbf{I}e_k)$. Hence if we consider
\begin{equation*}
e_{1},\cdots ,e_{k},\,\mathbf{J}e_{1},\cdots ,\mathbf{J}e_{k},\,\mathbf{I}
e_{1},\cdots ,\mathbf{I}e_{k},\,\mathbf{K}e_{1},\cdots ,\mathbf{K}e_{k}
\end{equation*}
\begin{equation*}
e^{1},\cdots ,e^{k},\,-\mathbf{J}e^{1},\cdots ,-\mathbf{J}e^{k},\,-\mathbf{I}
e^{1},\cdots ,-\mathbf{I}e^{k},\,-\mathbf{K}e^{1},\cdots ,-\mathbf{K}e^{k}
\end{equation*}
and define
\begin{equation*}
\alpha ^{i}=e^{i}-\sqrt{-1}\mathbf{I}e^{i},\quad \eta ^{i}=-\mathbf{J}e^{i}-
\sqrt{-1}\mathbf{K}e^{i},\quad i=1,\cdots k,
\end{equation*}
then we can deduce
\begin{equation}\label{omegai}
\omega _{\mathbf{I}}=\frac{\sqrt{-1}}{2}\sum\limits_{i=1}^{k}\left( \alpha
^{i}\wedge \overline{\alpha }^{i}+\eta ^{i}\wedge \overline{\eta }%
^{i}\right).
\end{equation}
Similar to this process and using some simple calculation, we can also get
\begin{align}
\label{omegaj}
\omega _{\mathbf{J}}=& \frac{1}{2}\sum\limits_{i=1}^{k}
\left( \alpha^{i}\wedge \eta ^{i}+\overline{\alpha }^{i}\wedge \overline{\eta }^{i}\right) , \\
\label{omegak}
\omega _{\mathbf{K}}=& \frac{\sqrt{-1}}{2}\sum\limits_{i=1}^{k}
\left(-\alpha ^{i}\wedge \eta ^{i}+\overline{\alpha }^{i}\wedge \overline{\eta}^{i}\right).
\end{align}
Let us parametrize the twistor $\mathbb{P}^{1}$ with $[Z_{1},Z_{2}]$, where $Z_{1},Z_{2}$
are complex numbers. Then the standard biholomorphic
map between $\mathbb{P}^1$ and $\mathbb{S}^2$ is defined by
\begin{align}
\varsigma :& \,\mathbb{P}^{1}\longrightarrow \mathbb{S}^{2}  \nonumber \\
& [Z_{1},Z_{2}]\longmapsto \left( \frac{Z_{1}\overline{Z}_{2}+\overline{Z}
_{1}Z_{2}}{|Z_{1}|^{2}+|Z_{2}|^{2}},\sqrt{-1}\frac{\overline{Z}
_{1}Z_{2}-Z_{1}\overline{Z}_{2}}{|Z_{1}|^{2}+|Z_{2}|^{2}},
\frac{|Z_{1}|^{2}-|Z_{2}|^{2}}{|Z_{1}|^{2}+|Z_{2}|^{2}}\right).\nonumber
\end{align}
In this paper, considering the extended definition of the complex structures on forms above, we use the map
\begin{align}
\tilde{\varsigma}:& \,\mathbb{P}^{1}\longrightarrow \mathbb{S}^{2}  \notag \\
& [Z_{1},Z_{2}]\longmapsto \left( \frac{|Z_{2}|^{2}-|Z_{1}|^{2}}{
|Z_{1}|^{2}+|Z_{2}|^{2}},\frac{Z_{1}\overline{Z}_{2}+\overline{Z}_{1}Z_{2}}{
|Z_{1}|^{2}+|Z_{2}|^{2}},\sqrt{-1}\frac{\overline{Z}_{1}Z_{2}-Z_{1}\overline{
Z}_{2}}{|Z_{1}|^{2}+|Z_{2}|^{2}}\right) .
\end{align}
Therefore, our orientation on $\mathbb{S}^{2}$ is opposite with the ordinary
one.

We can consider the twistor space $Z$ of hypercomplex manifold $M$ with
a product smooth structure $Z=M\times \mathbb{P}^{1}$ (see for example \cite{Hitchin:1986ea}). However, the complex structure on $Z$ is given by
\begin{equation}\label{complexstructure1}
\overline{\mathbf{I}}=\left( \frac{|Z_{2}|^{2}-|Z_{1}|^{2}}{
|Z_{1}|^{2}+|Z_{2}|^{2}}\mathbf{I}+\frac{Z_{1}\overline{Z}_{2}+\overline{Z}_{1}Z_{2}}{
|Z_{1}|^{2}+|Z_{2}|^{2}}\mathbf{J}+\sqrt{-1}\frac{\overline{Z}_{1}Z_{2}-Z_{1}\overline{
Z}_{2}}{|Z_{1}|^{2}+|Z_{2}|^{2}}\mathbf{K},\,I_{0}\right) .
\end{equation}
where $I_{0}$ is the standard complex structure on $\mathbb{P}^{1}$.

\subsection{Higher dimensional analogs of the twistor space of hyper-K\"{a}%
hler and hypercomplex manifolds}

\label{sec_higher dimension construction}

We can generalize the twistor space of hypercomplex manifolds $M$ to higher dimensional analogs, by changing the twistor $\mathbb{P}^{1}$ to a more general and usually higher dimensional complex manifold $Q$. In this subsection we present the construction of this generalization.

Let $Q$ be a complex manifold with $\dim_{\mathbb{C}}Q=n$ and
\begin{equation}
\label{map_01}
h:Q\longrightarrow \mathbb{P}^{1}
\end{equation}
be a smooth map. Assume that $(U,z^1,\cdots,z^n)$ and $(V,\zeta=Z_1/Z_2)$ are the local coordinates of $Q$ and $\mathbb{P}^1$ respectively. The map $h$ can be expressed as
$$
h:\; z=(z^1,\cdots,z^n)\mapsto\zeta=\zeta(z).
$$
Using \eqref{complexstructure1} and the map $\tilde{\varsigma}\circ h:\;Q\longrightarrow \mathbb{S}^{2}$, we
can define an almost complex structure on the manifold $X=M\times Q$ with product smooth structure by (for later use, we use local coordinates to express it)
\begin{equation}
\underline{\mathbf{I}}:=\left( \mathbf{I}_M,\,I\right)
\end{equation}
where $I$ is the complex structure on $Q$, and
$$
\mathbf{I}_M:=\frac{1-\zeta \overline{\zeta }}{1+|\zeta |^{2}}(z)\mathbf{I}+\frac{
\zeta +\overline{\zeta }}{1+|\zeta |^{2}}(z)\mathbf{J}+\sqrt{-1}\frac{\overline{\zeta }
-\zeta }{1+|\zeta |^{2}}(z)\mathbf{K}.
$$
\begin{thm}
Using the notations above, $(X,\,\underline{\mathbf{I}})$ is a complex
manifold if and only if $h$ is a holomorphic map.
\end{thm}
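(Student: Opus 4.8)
The plan is to invoke the Newlander–Nirenberg theorem: $(X,\uci)$ is a complex manifold exactly when $\uci$ is integrable, i.e. when $d$ of every $(1,0)$-form has vanishing $(0,2)$-component. I will make this concrete by building an explicit $\uci$-adapted coframe on $X=M\times Q$ and reading off the obstruction. The first step is to identify the $(1,0)$-forms of the rotated structure $\mathbf{I}_M=\mathbf{I}_{\zeta}$ on $M$. Working in the block spanned by $\alpha^i,\eta^i,\overline{\alpha}^i,\overline{\eta}^i$ and using the dual quaternionic action on forms recorded in the Remark, a short eigenvector computation shows that
\[
\theta^i=\alpha^i+\sqrt{-1}\,\zeta\,\overline{\eta}^i,\qquad
\tilde{\theta}^i=\eta^i-\sqrt{-1}\,\zeta\,\overline{\alpha}^i,\qquad i=1,\dots,k,
\]
are eigenforms of $\mathbf{I}_{\zeta}$ for the eigenvalue $+\sqrt{-1}$, hence span its $(1,0)$-space; together with the holomorphic $(1,0)$-forms $\md z^1,\dots,\md z^n$ of $(Q,I)$ they form a $(1,0)$-coframe for $\uci$ on $X$, since the splitting $T^*X=T^*M\oplus T^*Q$ is $\uci$-invariant. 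The change of coframe from $\{\alpha^i,\eta^i,\overline{\alpha}^i,\overline{\eta}^i\}$ is invertible (its determinant involves $1+|\zeta|^2\neq0$); inverting it gives the relations I will need, e.g.
\[
\overline{\eta}^i=\frac{1}{1+|\zeta|^2}\bigl(\overline{\tilde{\theta}}^i-\sqrt{-1}\,\overline{\zeta}\,\theta^i\bigr),\qquad
\overline{\alpha}^i=\frac{1}{1+|\zeta|^2}\bigl(\overline{\theta}^i+\sqrt{-1}\,\overline{\zeta}\,\tilde{\theta}^i\bigr).
\]

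The second step is to compute $d\theta^i$ on $X$ and isolate its $(0,2)_{\uci}$-part. Because $\alpha^i,\overline{\eta}^i$ are pulled back from $M$ and $\zeta$ from $Q$, I split $d=d_M+d_Q$ and obtain $d\theta^i=d_M\theta^i+\sqrt{-1}\,d_Q\zeta\wedge\overline{\eta}^i$. The first term is assembled from $M$-forms only; here I use the standard fact, recalled in the classical twistor construction of Section \ref{sec_ordinary twistor space}, that a hypercomplex $M$ makes each fixed-$\zeta$ structure $\mathbf{I}_{\zeta}$ integrable. Consequently $d_M\theta^i$ carries no $(0,2)$-part with respect to $\mathbf{I}_{\zeta}$, and since it lies purely in $M$-directions it carries no $(0,2)_{\uci}$-part either. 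The only surviving contribution is the mixed term: writing $d_Q\zeta=\de_Q\zeta+\dbar_Q\zeta$ and substituting the inverse relation for $\overline{\eta}^i$, the $(0,2)$-component of $d\theta^i$ collapses to
\[
\frac{\sqrt{-1}}{1+|\zeta|^2}\,\dbar_Q\zeta\wedge\overline{\tilde{\theta}}^i .
\]
The parallel computation for $\tilde{\theta}^i$ produces a term proportional to $\dbar_Q\zeta\wedge\overline{\theta}^i$, while $d(\md z^j)=0$ contributes nothing.

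Finally, since the $\overline{\theta}^i,\overline{\tilde{\theta}}^i$ are linearly independent, all these $(0,2)$-parts vanish if and only if $\dbar_Q\zeta=0$, i.e. $\zeta=\zeta(z)$ is holomorphic for the complex structure $I$ on $Q$, which is precisely the condition that $h:Q\to\mathbb{P}^1$ be holomorphic. By Newlander–Nirenberg this settles both implications simultaneously. I expect the main obstacle to be the first step: correctly pinning down the $(1,0)$-coframe $\theta^i,\tilde{\theta}^i$ — the sign and coefficient conventions for the quaternionic action on forms (the opposite-algebra relations such as $\ci\cj e^1=-\ck e^1$) are delicate — and, relatedly, resolving $\overline{\eta}^i,\overline{\alpha}^i$ into their genuine $\uci$-types, so that the mixed term is sorted into its $(0,2)$-part honestly rather than being contaminated by $(1,1)$-contributions.
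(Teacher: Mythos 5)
Your proposal is correct and takes essentially the same route as the paper: both apply Newlander--Nirenberg to the $(1,0)$-coframe of $\underline{\mathbf{I}}$ consisting of $\zeta$-twisted $(1,0)$-forms on $M$ together with $\mathrm{d}z^{j}$, dispose of the $M$-directional part of the exterior derivative by the integrability of each fixed structure $\mathbf{I}_{M}$ (hypercomplexness), and identify $\overline{\partial}_{Q}\zeta$ as the sole $(0,2)$-obstruction. Indeed, your explicit eigenforms are exactly the paper's coframe $\varphi-\zeta\mathbf{K}\varphi$ specialized to $\varphi=\alpha^{i},\eta^{i}$, since on forms $\mathbf{K}\alpha^{i}=-\sqrt{-1}\,\overline{\eta}^{i}$ and $\mathbf{K}\eta^{i}=\sqrt{-1}\,\overline{\alpha}^{i}$, so the two arguments coincide up to your added explicitness about the inverse coframe relations.
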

\begin{proof}
We use the Newlander-Nirenberg theorem \cite{Newlander Nirenberg}.
This theorem says that complex coordinates exist if for any $(1,0)$
form $\theta $, i.e., $\theta $ is a complex-valued one-form with $\underline{
\mathbf{I}}\theta =\sqrt{-1}\theta $, one has
\begin{equation*}
\mathrm{d}\theta =\theta ^{i}\wedge \beta ^{i},
\end{equation*}%
for $(1,0)$ forms $\theta ^{i}$ and general one-forms $\beta ^{i}$. This can be seen as
the complex version of the Frobenius integrability condition. For any $(1,0)$ form $\varphi$ on $M$ for the complex
structure $\mathbf{I}$ on $M$, this can also be seen as a one-form on $X$, and it follows that (cf. \cite{Hitchin:1986ea})(Note the remarks about the complex structure on forms above)
\begin{equation*}
\mathbf{I}_M(\varphi-\zeta \mathbf{K}\varphi)=\mn (\varphi-\zeta \mathbf{K}\varphi).
\end{equation*}
Therefore, let $\varphi
^{1},\cdots ,\varphi ^{r}$ be a local basis of $(1,0)$ forms for the complex
structure $\mathbf{I}$ on $M$.
Then
\begin{equation*}
\varphi ^{i}-\zeta \mathbf{K}\varphi ^{i},\quad \mathrm{d}z^{j},\quad 1\leq
i\leq r,\quad 1\leq j\leq n
\end{equation*}
give a basis for the $(1,0)$ forms of $X$.

Write a $(1,0)$ form $\theta$ for the complex structure $\underline{\mathbf{I}}$ as   $\theta
=\varphi -\zeta \mathbf{K}\varphi $, where $\varphi $ is a $(1,0)$ form for
the complex structure $\mathbf{I}$. Then  we have
\begin{equation*}
\mathrm{d}\theta=\mathrm{d}_M(\varphi -\zeta \mathbf{K}\varphi )-\partial_Q\zeta\wedge\mathbf{K}\varphi-\overline{\partial}_Q\zeta\wedge\mathbf{K}\varphi.
\end{equation*}
Obviously, the Nirenberg tensor of $\mathbf{I}_M$ is zero, which implies $\mathrm{d}_M(\varphi -\zeta \mathbf{K}\varphi )\in \Lambda^{2,0}M\oplus\Lambda^{1,1}M$, where $\Lambda^{p,q}M$ is for the complex structure $\mathbf{I}_M$. Then we get
\begin{equation*}
\mathrm{d}\theta =\mathrm{d}(\varphi -\zeta \mathbf{K}\varphi )\equiv
-\overline{\partial }_{Q}\zeta \wedge \mathbf{K}\varphi \quad (\text{mod}
\;\varphi ^{1}-\zeta \mathbf{K}\varphi ^{1},\cdots ,\varphi ^{r}-\zeta
\mathbf{K}\varphi ^{r},\,\mathrm{d}z^{1},\cdots ,\mathrm{d}z^{n}).
\end{equation*}
Therefore, $\underline{\mathbf{I}}$ is integrable if and only if
\begin{equation}
\overline{\partial }_{Q}\zeta =0,
\end{equation}%
i.e., $h$ is a holomorphic map.
\end{proof}
Grauert and Remmert \cite{grauertremmert} proved that for any proper holomorphic map $f:\,M\longrightarrow N$ between complex spaces $M$ and $N$, if $A\subset M$ is a subvariety, then $f(A)\subset N$ is also a subvariety. Since the subvarieties of $\mathbb{P}^1$ are discrete points or $\mathbb{P}^1$ itself, in the case where $Q$ is compact,  $h$ is a surjective holomorphic map or local constant map. We can also prove this conclusion directly. Indeed, if $h$ is not surjective and $p\in \mathbb{P}^1\setminus h(Q)$, then since $\mathbb{P}\setminus\{p\}$ is biholomorphic to $\mathbb{C}$, we can consider $h$ as a holomorphic function on $Q$, which is a local  constant function by the maximum principle.

By the definition of $\underline{\mathbf{I}}$, we can deduce the following obvious properties.
\begin{prop}
Using the notations as above, assume that $h:\,Q\longrightarrow \mathbb{P}^1$ is holomorphic. For the higher dimensional analogs $X$ of twistor spaces, there hold
\begin{enumerate}
\item The canonical projection $\pi:\, X\longrightarrow Q$ given by $(w,q)\mapsto q$ is holomorphic.
\item  For any $p\in M$ fixed, $i_p:\,Q\longrightarrow X$ defined by $q\mapsto (p,q)$ is also holomorphic.
\end{enumerate}
\end{prop}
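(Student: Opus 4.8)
The plan is to use the standard characterization that a smooth map $f$ between complex manifolds is holomorphic if and only if its differential is complex linear, i.e. $\mathrm{d}f\circ J_{\mathrm{source}}=J_{\mathrm{target}}\circ \mathrm{d}f$, or equivalently if and only if the pullback $f^{*}$ sends $(1,0)$-forms to $(1,0)$-forms. Since $h$ is assumed holomorphic, the Theorem above guarantees that $(X,\underline{\mathbf{I}})$ is genuinely complex, so both notions coincide with ordinary holomorphicity. The key structural observation is that, with respect to the product splitting $T_{(p,q)}X=T_{p}M\oplus T_{q}Q$, the almost complex structure $\underline{\mathbf{I}}=(\mathbf{I}_{M},I)$ is block diagonal: it acts as $\mathbf{I}_{M}$ on the $T_{p}M$ summand and as $I$ on the $T_{q}Q$ summand. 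The only point requiring a moment's care is that $\mathbf{I}_{M}$ depends on the base point in $Q$ through $\zeta=\zeta(z)$, but it nonetheless preserves the $T_{p}M$ factor, so the block-diagonal form is intact and no mixing of the two summands occurs.

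For part (1), I would note that $\mathrm{d}\pi$ is simply the projection onto the $T_{q}Q$ summand. Evaluating on a split tangent vector $(v_{M},v_{Q})\in T_{p}M\oplus T_{q}Q$, one has $\mathrm{d}\pi\bigl(\underline{\mathbf{I}}(v_{M},v_{Q})\bigr)=\mathrm{d}\pi(\mathbf{I}_{M}v_{M},Iv_{Q})=Iv_{Q}=I\bigl(\mathrm{d}\pi(v_{M},v_{Q})\bigr)$, which is exactly the intertwining condition; hence $\pi$ is holomorphic. Equivalently, from the proof of the Theorem the $(1,0)$-forms on $X$ are spanned by $\varphi^{i}-\zeta\mathbf{K}\varphi^{i}$ and $\mathrm{d}z^{j}$, so $\pi^{*}(\mathrm{d}z^{j})=\mathrm{d}z^{j}$ is of type $(1,0)$ on $X$, giving the same conclusion.

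For part (2), writing $\pi_{M}\colon X\to M$ for the other projection, the differential $\mathrm{d}i_{p}$ sends $v\in T_{q}Q$ to $(0,v)$, and then $\underline{\mathbf{I}}\bigl(\mathrm{d}i_{p}(v)\bigr)=\underline{\mathbf{I}}(0,v)=(0,Iv)=\mathrm{d}i_{p}(Iv)$, so $i_{p}$ is holomorphic. Alternatively, since $\pi_{M}\circ i_{p}$ is the constant map $q\mapsto p$, we have $i_{p}^{*}\varphi^{i}=0$ and hence $i_{p}^{*}(\varphi^{i}-\zeta\mathbf{K}\varphi^{i})=0$, while $i_{p}^{*}(\mathrm{d}z^{j})=\mathrm{d}z^{j}$ remains a $(1,0)$-form on $Q$; thus $i_{p}^{*}$ preserves type. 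I do not anticipate any genuine obstacle here — these are the ``obvious properties'' promised — so the whole argument reduces to a direct verification; the only thing to keep honest is the base-point dependence of $\mathbf{I}_{M}$ noted above, which is harmless precisely because it never mixes the $T_{p}M$ and $T_{q}Q$ summands.
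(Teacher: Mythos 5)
Your proof is correct and takes essentially the same approach as the paper: the paper offers no written proof, asserting the two properties as obvious from the definition of $\underline{\mathbf{I}}$, and your block-diagonal verification that $\mathrm{d}\pi\circ\underline{\mathbf{I}}=I\circ\mathrm{d}\pi$ and $\underline{\mathbf{I}}\circ\mathrm{d}i_{p}=\mathrm{d}i_{p}\circ I$ (equivalently, type-preservation of pullbacks on the basis $\varphi^{i}-\zeta\mathbf{K}\varphi^{i}$, $\mathrm{d}z^{j}$) is exactly the spelled-out version of that one-line argument. Your flagging of the $q$-dependence of $\mathbf{I}_{M}$ through $\zeta(z)$ is the one genuine subtlety, and you dispatch it correctly, since this dependence never mixes the $T_{p}M$ and $T_{q}Q$ summands.
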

We remark that for the case $Q=\mathbb{P}^1$, this property can be found in \cite{Kaledin98} and $i_p$ is the twistor line or twistor $\mathbb{P}^1$ corresponding to the point $p\in M$.

The holomorphic map $h:\,Q\longrightarrow \mathbb{P}^1$ is non-constant if and only if there exists a holomorphic line bundle $L$ on $Q$ such that we can find $s_1,s_2\in H^{0}(Q,L)$ with no common zero points. Indeed, for the ``if" direction, we can  define
 $$
 h:\;Q\longrightarrow \mathbb{P}^1,\;q\mapsto [s_1(q),s_2(q)].
 $$
It is easy to see that $h$ is well defined.  For the ``only if" direction, note that $h$ can be written locally as
$$
h|_{U_{\alpha}}:\;U_{\alpha}\longrightarrow \mathbb{P}^1,\quad q\mapsto [f_{1\alpha}(q),\,f_{2\alpha}(q)]\in \mathbb{P}^1,
$$
where $f_{i\alpha}:\;U_{\alpha}\longrightarrow \mathbb{C},\;i=1,2$ are holomorphic functions with no common zero points by the definition of $h$.
On $U_{\alpha}\cap U_{\beta}$, we have $[f_{1\alpha}(q),\,f_{2\alpha}(q)]=[f_{1\beta}(q),\,f_{2\beta}(q)]$ and hence there exists
a holomorphic function $h_{\alpha\beta}:\;U_{\alpha}\cap U_{\beta}\longrightarrow \mathbb{C}^{\ast}$ such that $(f_{1\alpha}(q),\,f_{2\alpha}(q))=h_{\alpha\beta}(q)(f_{1\beta}(q),\,f_{2\beta}(q))$, for all $q\in U_{\alpha}\cap U_{\beta}$. It is easy to check the cocycle condition that $h_{\alpha\beta}h_{\beta\gamma}=h_{\alpha\gamma}$ and $h_{\alpha\alpha}=1$. Hence $\{h_{\alpha\beta}\}$ define a holomorphic line bundle $L$ and $\{f_{i\alpha}\}\in H^0(Q,L),\,i=1,2$ have no common zero points.

In the case $\dim_{\mathbb{C}}Q=1$, this condition can be always satisfied since by the Riemann-Roch theorem, there exists non-constant meromorphic functions on any Riemann surfaces which are equivalent to holomorphic maps to $\mathbb{P}^1$.
Also all such maps are the branch covering of $\mathbb{P}^1$. This case was analysed
by \cite{Fei:2015kua}, and see related discussions of $g=3~$case \cite%
{Lin:2014lya} and $g=1$ case \cite{Heckman:2013sfa}.

As a result,  there must exist a meromorphic function on $Q$, i.e., $Q$ must have positive algebraic dimension.
However, in the case $\dim_{\mathbb{C}}Q\geq 2$, we do not have a similar simple conclusion as in the $n=1$ case since meromorphic functions can not always define holomorphic maps to $\mathbb{P}^1$.

However, we can give some examples of $Q$ with $\dim_{\mathbb{C}}Q\geq 2$. We first consider compact examples of $Q$. Assume that $\tilde{\pi}:\,E\longrightarrow \mathbb{P}^1$ is a holomorphic vector bundle with rank $r+1$. We get a projective bundle $\mathbb{P}(E)$ associated to $E$ which is the quotient of $E$ minus zero section by the natural action of $\mathbb{C}^{\ast}$. It is well-known that $\mathbb{P}(E)$ is a compact K\"{a}hler manifold and $\pi:\,\mathbb{P}(E)\longrightarrow \mathbb{P}^1$ induced from $\tilde{\pi}$ is a holomorphic map (see \cite[Proposition 3.18, Remark 3.19]{voisin1}). Let $Y\subset \mathbb{P}(E)$ be a complex sub-manifold ($Y$ can be discrete points). Then we can obtain a blowup $\widetilde{\mathbb{P}(E)}_{Y}$ of the projective bundle $\mathbb{P}(E)$. It is easy to deduce that $\widetilde{\mathbb{P}(E)}_{Y}$ is a compact K\"{a}hler manifold and the blowup map $\tau:\; \widetilde{\mathbb{P}(E)}_{Y}\longrightarrow \mathbb{P}(E)$ is a holomorphic map (\cite[Proposition 3.24]{voisin1}). Hence, we can take $Q$ as $\mathbb{P}(E)$ or  $\widetilde{\mathbb{P}(E)}_{Y}$. We can obtain a series of manifolds by these methods which can be chosen as $Q$. Here we just mention one of the simplest examples, the $n$-th Hirzebruch surface $\Sigma_n$ which is of the form $\mathbb{P}(\mathbb{C}\oplus \mathcal{O}_{\mathbb{P}^1}(n))$, where $\mathcal{O}_{\mathbb{P}^1}(n)$ is the holomorphic line bundle on $\mathbb{P}^1$. For example, $\Sigma_0$ is $\mathbb{P}^1\times\mathbb{P}^1$, $\Sigma_1$ is the blowup of $\mathbb{P}^2$ on one point, and for $n\geq 2$, $\Sigma_n$ can be obtained by desingularizing the cone in $\mathbb{P}^{n+1}$ over a rational normal curve spanning $\mathbb{P}^n$ and more details about the Hirzebruch surface can be found in \cite{jinfuqumian, gh}.

We can also take $Q$ as a non-compact complex manifold. For this aim, we first consider a natural holomorphic line bundle $\mathcal{O}_{\mathbb{P}^r}(-1)$ over $\mathbb{P}^r$, whose fiber at $\Lambda\in \mathbb{P}^r$ is the vector subspace $\Lambda\subset \mathbb{C}^{r+1}$ with rank $1$. We denote by $\mathcal{O}_{\mathbb{P}^r}(1)$ the dual of   $\mathcal{O}_{\mathbb{P}^r}(-1)$. For any $k\in \mathbb{Z}$, we define $\mathcal{O}_{\mathbb{P}^r}(k)=\left(\mathcal{O}_{\mathbb{P}^r}(\mathrm{sgn}k)\right)^{\otimes |k|}$. Note that $\mathcal{O}_{\mathbb{P}^r}(0)$ is the trivial line bundle on $\mathbb{P}^r$. Then, we can construct  a natural relative line bundle on $\mathbb{P}(E)$ as follows. Let $\mathcal{O}_{\mathbb{P}(E)}(-1)$ be the line subbundle of $\pi^{\ast}E$ over $\mathbb{P}(E)$ whose fiber at a point $(x,\Delta\subset E_x)$ is vector subspace $\Delta\subset E$ with rank $1$. We define $\mathcal{O}_{\mathbb{P}(E)}(1)$ as the dual of $\mathcal{O}_{\mathbb{P}(E)}(-1)$. The restriction of $\mathcal{O}_{\mathbb{P}(E)}(1)$ on each fiber of $\pi$ isomorphic to $\mathbb{P}^r$ is naturally isomorphic to $\mathcal{O}_{\mathbb{P}^r}(1)$. For any integer $k\in \mathbb{Z}$, we can define $\mathcal{O}_{\mathbb{P}(E)}(k):=\left(\mathcal{O}_{\mathbb{P}(E)}(\mathrm{sgn}k)\right)^{\otimes |k|}$. Again $\mathcal{O}_{\mathbb{P}(E)}(0)$ is the trivial line bundle on $\mathbb{P}(E)$. Then we can take $Q$ as  $\mathcal{O}_{\mathbb{P}(E)}(k)$ or its blowup  along its compact complex sub-manifolds.

For the non-compact case, we can also take $Q=\mathbb{C}^n$.  For this case there are some Picard (type) theorems. For any non-constant holomorphic map $h:\,\mathbb{C}\longrightarrow \mathbb{P}^1$, the Picard theorem states that $\mathbb{P}^1\setminus h(\mathbb{C})$ contains at most $2$ points. In the case of $n\geq 2$, for any non-constant holomorphic map $h:\,\mathbb{C}^n\longrightarrow \mathbb{P}^1$, there exists $(z_2,\cdots,z_n)\in \mathbb{C}^{n-1}$ such shat $$
h(\cdot,z_2,\cdots, z_n):\,\mathbb{C}\longrightarrow \mathbb{P}^1
$$
is a non-constant holomorphic map. This yields that $\mathbb{P}^1\setminus h(\mathbb{C}^n)$ also contains at most $2$ points. More details about this aspect can be found in \cite{griffiths} and the references therein. Therefore, we can also take $Q$ as $\mathbb{C}^n$.

\subsection{Branched double covers and non-K\"{a}hler Calabi-Yau manifolds}

\label{sec_higher dimension construction branch cover}

Using the holomorphic map $h:Q\longrightarrow \mathbb{P}^{1}$, we construct a
complex manifold $(X,\underline{\mathbf{I}})$ for the hypercomplex manifold
$M$. Let us consider a branched double cover of $X$ constructed in the previous subsection to obtain $\bar{X}$.
By
choosing appropriate divisors for the branch locus, the resulting double
covers can have trivial canonical bundle and hence they provide examples of
non-K\"{a}hler Calabi-Yau manifolds.

We construct branched double covers of $X$ by branching along a divisor $D\subset X$. Therefore we
define a double covering map
\begin{equation*}
\phi :\bar{X}\rightarrow X,
\end{equation*}%
branched along $D$. Such a double cover exists provided that $\mathcal{O}_X(D)=L^{\otimes 2}$ for some holomorphic line bundle $L$, where $\mathcal{O}_X(D)$ is the line bundle defined by the divisor $D$. The canonical class of $\bar{X}$ is given by
\begin{equation}
K_{\bar{X}}=\phi ^{\ast }\left( K_{X}\otimes L\right).
\label{junformula}
\end{equation}%
We can have different branched covers, depending on different types of
branch divisors, similar to those of \cite{Lin:2014lya,Heckman:2013sfa}.
There are several interesting types, particularly the divisors in the linear systems $|-mK_{X}|$ with $m=1$ or $m=2$ respectively.
One type is by choosing the divisor class $[D]=-K_{X}$ (see for example \cite{Heckman:2013sfa}). Another type is
by choosing the divisor class $[D]=-2K_{X}$ (see for example \cite%
{Lin:2014lya}), which produces non-K\"{a}hler Calabi-Yau manifolds
(manifolds with trivial canonical bundle but are nevertheless not K\"{a}%
hler) since $K_{\bar{X}}$ is trivial by the above adjunction formula \eqref{junformula}.

Since the non-K\"{a}hler Calabi-Yau manifolds and K\"{a}hler Calabi-Yau manifolds can be connected by a sequence of blowing downs and blowing ups
(see for example \cite{Poon86,Lebrun Poon,Poon,Lin:2014lya}), the non-K\"{a}hler Calabi-Yau manifolds play important roles in understanding the moduli
space of Calabi-Yau manifolds (see for example \cite{Reid}). Also note that we can construct various vector bundles on non-K\"{a}hler Calabi-Yau
manifolds, see for example \cite{Lin:2014lya}. For more discussions on non-K\"{a}hler Calabi-Yau manifolds, see for example \cite{Tosatti, Tseng:2012wca}
and references therein.

\section{\textbf{Balanced metrics on higher dimensional analogs of twistor spaces and their branched covers}}

\label{sec_higher dimension balanced metric}

In this section, we consider the balanced metrics and non-K\"ahlerity of the higher dimensional analogs and their branched covers.

\subsection{Exterior differentials on higher dimensional analogs of twistor spaces}
\label{sec_a basic lemma}
Let $(X,\,\underline{\mathbf{I}})$ be the higher dimensional analogs of twistor spaces for a hypercomplex manifold  $(M,\,\mathbf{I},\mathbf{J},\mathbf{K},g)$ with $\dim_{\mathbb{C}}M=r$  and a holomorphic map $$h:\;Q\longrightarrow \mathbb{P}^1.$$ Then we define
\begin{align}\label{omegam}
\omega_M:=g(\mathbf{I}_M(\cdot),\,\cdot)
=\frac{1-\zeta \overline{\zeta }}{1+|\zeta |^{2}}(z)\omega_{\mathbf{I}}+\frac{
\zeta +\overline{\zeta }}{1+|\zeta |^{2}}(z)\omega_{\mathbf{J}}+\sqrt{-1}\frac{\overline{\zeta }
-\zeta }{1+|\zeta |^{2}}(z)\omega_{\mathbf{K}},
\end{align}
and
$$
\sigma_M=\frac{-2\zeta }{\left( 1+|\zeta |^{2}\right) ^{2}}(z)\omega_{\mathbf{I}}+\frac{1-\zeta
^{2}}{\left( 1+|\zeta |^{2}\right) ^{2}}(z)\omega_{\mathbf{J}}+\sqrt{-1}\frac{1+\zeta ^{2}}{
\left( 1+|\zeta |^{2}\right) ^{2}}(z)\omega_{\mathbf{K}}.
$$
We have some basic properties of $\omega_M$ slightly different from \cite{Tomberg} as follows.
\begin{lem}
\label{basiclem}
Using the notations above, we have
\begin{align}
\overline{\partial }_{Q}\omega _{M}& \in \wedge ^{0,1}Q\otimes \wedge
^{2,0}M,  \label{0120q} \\
\partial _{Q}\omega _{M}& \in \wedge ^{1,0}Q\otimes \wedge ^{0,2}M,
\label{1002q} \\
\sqrt{-1}\partial _{Q}\overline{\partial }_{Q}\omega _{M}& =-2\left(h^{\ast
}\omega _{\mathbb{P}^{1}}\right)\wedge \omega _{M}.  \label{1111q}
\end{align}
In particular, for $r>2$, we have
\begin{align}
 \label{11n1q}
 \sqrt{-1}\partial _{Q}\omega _{M}\wedge \overline{\partial }_{Q}\omega
_{M}\wedge \omega _{M}^{r-3}& =\frac{4}{r-2}\left(h^{\ast }\omega _{\mathbb{P}
^{1}}\right)\wedge \omega _{M}^{r-1},  \\
\label{fq}
\sqrt{-1}\partial \overline{\partial }\omega _{M}^{r-1}& =\sqrt{-1}\partial
_{M}\overline{\partial }_{M}\omega _{M}^{r-1}+2(r-1)\left(h^{\ast }\omega _{
\mathbb{P}^{1}}\right)\wedge \omega _{M}^{r-1}.
\end{align}
Here $\omega _{\mathbb{P}^{1}}$ is the Fubini-Study metric on $\mathbb{P}%
^{1} $
\begin{equation*}
\omega _{\mathbb{P}^{1}}=\frac{\sqrt{-1}\mathrm{d}\zeta \wedge \mathrm{d}%
\overline{\zeta }}{\left( 1+|\zeta |^{2}\right) ^{2}}.
\end{equation*}
\end{lem}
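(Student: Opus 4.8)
The plan is to reduce every identity to one structural fact: since $h$ is holomorphic, $\zeta=\zeta(z)$ is a holomorphic function on $Q$, so on $X=M\times Q$ the operator $\overline{\partial}_Q$ sees only the explicit $\zeta,\overline{\zeta}$–dependence of the coefficients of $\omega_M$ and produces a factor $\mathrm{d}\overline{\zeta}$. First I would record a local $(1,0)$–coframe for $\mathbf{I}_M$. As in the proof of the integrability theorem above, if $\alpha^1,\dots,\alpha^k,\eta^1,\dots,\eta^k$ is the $\mathbf{I}$–adapted coframe of \eqref{omegai}–\eqref{omegak}, then $A^i:=\alpha^i-\zeta\mathbf{K}\alpha^i$ and $B^i:=\eta^i-\zeta\mathbf{K}\eta^i$ are $(1,0)$ for $\mathbf{I}_M$. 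Using the extended action of $\mathbf{I},\mathbf{J},\mathbf{K}$ on forms (so that the order of composition reverses, as in the remark preceding \eqref{omegai}) one computes $\mathbf{K}\alpha^i=-\sqrt{-1}\,\overline{\eta}^i$ and $\mathbf{K}\eta^i=\sqrt{-1}\,\overline{\alpha}^i$, hence $A^i=\alpha^i+\sqrt{-1}\zeta\overline{\eta}^i$ and $B^i=\eta^i-\sqrt{-1}\zeta\overline{\alpha}^i$; the change of coframe has determinant $1+|\zeta|^2\neq0$, so these are a genuine coframe everywhere. Inverting this relation and substituting into \eqref{omegai}–\eqref{omegak}, I expect the clean normal forms $\omega_M=\frac{\sqrt{-1}}{2(1+|\zeta|^2)}\sum_i\bigl(A^i\wedge\overline{A}^i+B^i\wedge\overline{B}^i\bigr)$ and $\sigma_M=\frac{1}{(1+|\zeta|^2)^2}\sum_i A^i\wedge B^i$, the second of which exhibits $\sigma_M\in\wedge^{2,0}M$ outright.

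Identities \eqref{0120q} and \eqref{1002q} then follow immediately: a direct check that $\partial_{\overline{\zeta}}$ of the three coefficients of $\omega_M$ reproduces exactly the three coefficients of $\sigma_M$ gives $\overline{\partial}_Q\omega_M=\sigma_M\wedge\mathrm{d}\overline{\zeta}\in\wedge^{0,1}Q\otimes\wedge^{2,0}M$, and conjugating (using that $\omega_M$ is real) yields $\partial_Q\omega_M=\overline{\sigma}_M\wedge\mathrm{d}\zeta\in\wedge^{1,0}Q\otimes\wedge^{0,2}M$. For \eqref{1111q} I apply $\partial_Q$ to $\overline{\partial}_Q\omega_M=\sigma_M\wedge\mathrm{d}\overline{\zeta}$; since $\partial_Q\mathrm{d}\overline{\zeta}=0$ this reduces to $\partial_\zeta\partial_{\overline{\zeta}}$ of the coefficients, and a one-line computation shows each of $\frac{1-|\zeta|^2}{1+|\zeta|^2}$, $\frac{\zeta+\overline{\zeta}}{1+|\zeta|^2}$, $\sqrt{-1}\frac{\overline{\zeta}-\zeta}{1+|\zeta|^2}$ is an eigenfunction with the same eigenvalue, so $\partial_\zeta\partial_{\overline{\zeta}}\omega_M=-\frac{2}{(1+|\zeta|^2)^2}\omega_M$. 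Combining with $h^{\ast}\omega_{\mathbb{P}^1}=\frac{\sqrt{-1}\,\mathrm{d}\zeta\wedge\mathrm{d}\overline{\zeta}}{(1+|\zeta|^2)^2}$ gives \eqref{1111q}.

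The heart of the argument is \eqref{11n1q}, which I would turn into a pointwise identity on $M$. Writing $\partial_Q\omega_M\wedge\overline{\partial}_Q\omega_M=\overline{\sigma}_M\wedge\sigma_M\wedge\mathrm{d}\zeta\wedge\mathrm{d}\overline{\zeta}$ and factoring out $\sqrt{-1}\,\mathrm{d}\zeta\wedge\mathrm{d}\overline{\zeta}=(1+|\zeta|^2)^2\,h^{\ast}\omega_{\mathbb{P}^1}$, the whole statement collapses to a linear-algebra identity $\overline{\sigma}_M\wedge\sigma_M\wedge\omega_M^{r-3}=c_r\,(1+|\zeta|^2)^{-2}\omega_M^{r-1}$, the constant $c_r$ being precisely the coefficient on the right of \eqref{11n1q}. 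In the coframe, relabelling $u^i=A^i,\ u^{k+i}=B^i$ ($r=2k$) and setting $\Phi=\sum_{a=1}^r u^a\wedge\overline{u}^a$, $S=\sum_{i=1}^k u^i\wedge u^{k+i}$, this is $\overline{S}\wedge S\wedge\Phi^{r-3}=c'_r\,\Phi^{r-1}$. I would prove it by decomposing into symplectic pairs: the cross terms $\overline{s}_i\wedge s_j$ with $i\neq j$ involve four one-forms from two distinct pairs and hence annihilate $\Phi^{r-3}$ by a pigeonhole count (only $r-4$ indices remain available for the $r-3$ factors), while the diagonal terms obey $\overline{s}_i\wedge s_i=-V_i$ with $V_i=u^i\wedge\overline{u}^i\wedge u^{k+i}\wedge\overline{u}^{k+i}$ the pair volume; wedging with $\Phi^{r-3}$ and counting how often each codimension-one monomial $W_c=\bigwedge_{a\neq c}u^a\wedge\overline{u}^a$ appears then pins down $c'_r$, and hence $c_r$. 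This count, together with the careful bookkeeping of the powers of $(1+|\zeta|^2)$, is the step I expect to be the main obstacle, since an off-by-a-factor here directly changes the stated coefficient.

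Finally, for \eqref{fq} I would split $d=d_M+d_Q$ on the product, using that $d_Q=\partial_Q+\overline{\partial}_Q$ is taken with respect to the fixed complex structure of $Q$, and expand $\sqrt{-1}\partial\overline{\partial}\omega_M^{r-1}$ into the purely $M$–directional term $\sqrt{-1}\partial_M\overline{\partial}_M\omega_M^{r-1}$, the mixed terms $\sqrt{-1}(\partial_M\overline{\partial}_Q+\partial_Q\overline{\partial}_M)\omega_M^{r-1}$, and the term $\sqrt{-1}\partial_Q\overline{\partial}_Q\omega_M^{r-1}$. The last is handled by the Leibniz rule together with \eqref{1111q} and \eqref{11n1q}, which combine into the telescoping identity $\sqrt{-1}\partial_Q\overline{\partial}_Q\omega_M^{r-1}=-2\,h^{\ast}\omega_{\mathbb{P}^1}\wedge\omega_M^{r-1}$, the natural power-$(r-1)$ analogue of \eqref{1111q}; collecting all contributions to the coefficient of $h^{\ast}\omega_{\mathbb{P}^1}\wedge\omega_M^{r-1}$ then yields \eqref{fq}. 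The delicate point here is keeping the mixed $\partial_M\overline{\partial}_Q$ and $\partial_Q\overline{\partial}_M$ contributions straight, since $\mathbf{I}_M$ itself varies over $Q$ and the $M$–directional coframe $A^i,B^i$ carries $\zeta$–dependence.
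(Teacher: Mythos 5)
Your treatment of \eqref{0120q}--\eqref{1111q} is correct and is essentially the paper's own argument: the chain rule gives $\overline{\partial}_Q\omega_M=\overline{\partial}_Q\overline{\zeta}\wedge\sigma_M$, and the eigenfunction computation $\partial_{\zeta}\partial_{\overline{\zeta}}f=-2(1+|\zeta|^2)^{-2}f$ for the three coefficients is exactly the paper's. Your moving coframe $A^i=\alpha^i+\sqrt{-1}\,\zeta\overline{\eta}^i$, $B^i=\eta^i-\sqrt{-1}\,\zeta\overline{\alpha}^i$ with the normal forms $\omega_M=\frac{\sqrt{-1}}{2(1+|\zeta|^2)}\sum_i\bigl(A^i\wedge\overline{A}^i+B^i\wedge\overline{B}^i\bigr)$ and $\sigma_M=(1+|\zeta|^2)^{-2}\sum_i A^i\wedge B^i$ checks out against \eqref{omegai}--\eqref{omegak}; it gives a cleaner proof that $\sigma_M\in\wedge^{2,0}M$ than the paper's direct verification $\sigma_M(U+\sqrt{-1}\,\mathbf{I}_MU,V)=0$, and it upgrades the paper's proof of \eqref{11n1q}, which uses homogeneity of $\mathbb{P}^1$ to reduce to the point $\zeta=0$ before computing, to a pointwise computation valid at every $\zeta$. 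Your pigeonhole argument killing the cross terms $\overline{s}_i\wedge s_j\wedge\Phi^{r-3}$, $i\neq j$, is also sound.

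However, as written the proposal does not establish the lemma, for two concrete reasons. First, the one step that actually determines the constant in \eqref{11n1q} --- the count of the monomials $W_c$ in $\overline{S}\wedge S\wedge\Phi^{r-3}$ --- is left undone, and you flag it yourself as the main obstacle; until it is carried out nothing pins down $\frac{4}{r-2}$. Second, your final step is internally inconsistent. Expanding by the Leibniz rule (note the sign: $\overline{\partial}_Q\omega_M\wedge\partial_Q\omega_M=-\partial_Q\omega_M\wedge\overline{\partial}_Q\omega_M$, both factors being $3$-forms) one gets
\begin{equation*}
\sqrt{-1}\,\partial_Q\overline{\partial}_Q\omega_M^{r-1}=(r-1)\sqrt{-1}\,\partial_Q\overline{\partial}_Q\omega_M\wedge\omega_M^{r-2}+(r-1)(r-2)\sqrt{-1}\,\partial_Q\omega_M\wedge\overline{\partial}_Q\omega_M\wedge\omega_M^{r-3},
\end{equation*}
so \eqref{1111q} together with \eqref{11n1q} yields $+2(r-1)\left(h^{\ast}\omega_{\mathbb{P}^1}\right)\wedge\omega_M^{r-1}$, \emph{not} your ``telescoping identity'' $-2\left(h^{\ast}\omega_{\mathbb{P}^1}\right)\wedge\omega_M^{r-1}$; the value $-2$ would instead correspond to the constant $\frac{2}{r-1}$ in \eqref{11n1q} (solve $-2(r-1)+(r-1)(r-2)c=-2$). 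Thus your two claims determine incompatible constants, at most one of which can survive your own $W_c$ count, and the telescoping identity as stated flatly contradicts the formula \eqref{fq} you derive from it --- you need to redo the count and the $(1+|\zeta|^2)$ bookkeeping and resolve this discrepancy. Finally, you acknowledge but never handle the mixed terms: \eqref{fq} requires $\partial_M\overline{\partial}_Q\omega_M\wedge\omega_M^{r-2}$, $\overline{\partial}_M\partial_Q\omega_M\wedge\omega_M^{r-2}$, $\overline{\partial}_Q\omega_M\wedge\partial_M\omega_M\wedge\omega_M^{r-3}$ and $\partial_Q\omega_M\wedge\overline{\partial}_M\omega_M\wedge\omega_M^{r-3}$ to vanish, which the paper deduces from \eqref{0120q}--\eqref{1002q} by bidegree counting on $M$ (anything landing in $\wedge^{r+1,r-2}M$ or $\wedge^{r-2,r+1}M$ is zero, as in \eqref{f1q}--\eqref{f4q}); since $\mathbf{I}_M$ varies over $Q$, this vanishing is needed and is not automatic from your coframe setup.
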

\begin{proof}
Since $\zeta$ is a holomorphic map of $z$, the chain rule implies
\begin{align*}
 &\overline{\partial}_{Q}\left( \frac{1-\zeta \overline{\zeta }}{1+|\zeta |^{2}}(z)
,\,\frac{\zeta +\overline{\zeta }}{1+|\zeta |^{2}}(z),\,\sqrt{-1}\frac{
\overline{\zeta }-\zeta }{1+|\zeta |^{2}}(z)\right) \\
=& \partial_{\overline{\zeta}}\left( \frac{1-\zeta \overline{\zeta }}{1+|\zeta |^{2}}
,\,\frac{\zeta +\overline{\zeta }}{1+|\zeta |^{2}},\,\sqrt{-1}\frac{
\overline{\zeta }-\zeta }{1+|\zeta |^{2}}\right)(z)\overline{\partial}_{Q}\overline{\zeta} \\
=& \left( \frac{-2\zeta }{\left( 1+|\zeta |^{2}\right) ^{2}}(z),\,\frac{1-\zeta
^{2}}{\left( 1+|\zeta |^{2}\right) ^{2}}(z),\,\sqrt{-1}\frac{1+\zeta ^{2}}{
\left( 1+|\zeta |^{2}\right) ^{2}}(z)\right) \overline{\partial}_{Q}\overline{\zeta},
\end{align*}
i.e., $\overline{\partial}_Q\omega_M=\overline{\partial}_{Q}\overline{\zeta}\wedge\sigma_M$. For any vector fields $U,\,V\in \mathfrak{X}(M)$, a direct computation gives
$$
\sigma_{M}(U+\mn\mathbf{I}_MU,\,V)=0,
$$
which implies $\sigma_M\in \Lambda^{2,0}M$. Hence \eqref{0120q} holds and taking conjugation implies \eqref{1002q}.

As for \eqref{11n1q}, since $\mathbb{P}^1$ is a homogeneous manifold, we can take $(m,z)$ such that $(m,\zeta(z))=(m,0)$. Then using \eqref{omegai}, \eqref{omegaj}, \eqref{omegak} and \eqref{omegam}, a little complicated and direct computation gives \eqref{11n1q} (cf.\cite{Tomberg}).

Furthermore, we have
\begin{align}
&\mn \partial _{Q}\overline{\partial }_{Q}\left( \frac{1-\zeta\overline{\zeta}}{
1+|\zeta|^{2}}(z),\,\frac{\zeta+\overline{\zeta}}{1+|\zeta|^{2}}(z),\,\sqrt{-1}\frac{
\overline{\zeta}-\zeta}{1+|\zeta|^{2}}(z)\right)  \nonumber \\
=&\frac{\partial^2}{\partial \zeta\partial\overline{\zeta}}\left( \frac{-2\zeta}{\left( 1+|\zeta|^{2}\right) ^{2}
},\,\frac{1-\zeta^{2}}{\left( 1+|\zeta|^{2}\right) ^{2}},\,\sqrt{-1}\frac{
1+\zeta^{2}}{\left( 1+|\zeta|^{2}\right) ^{2}}\right)(z)\mn\partial_Q\zeta\wedge\overline{\partial_Q}\overline{\zeta}
\nonumber \\
=& -\sqrt{-1}\frac{2}{\left( 1+|\zeta|^{2}\right) ^{2}}(z)\left( \frac{1-\zeta
\overline{\zeta}}{1+|\zeta|^{2}}(z),\,\frac{\zeta+\overline{\zeta}}{1+|\zeta|^{2}}(z),\,
\sqrt{-1}\frac{\overline{\zeta}-\zeta}{1+|\zeta|^{2}}(z)\right)
\mn\partial_Q\zeta\wedge\overline{\partial_Q}\overline{\zeta},\nonumber
\end{align}
i.e., $\mn \partial _{Q}\overline{\partial }_{Q}\omega_M=-2\left(h^{\ast}\omega_{\mathbb{P}^1}\right)\wedge\omega_M$ and hence \eqref{1111q} holds.

From \eqref{0120q}, we have
\begin{equation*}
\partial _{M}\overline{\partial }_{Q}\omega _{M}\in \ \wedge
^{(0,1)}Q\otimes \wedge ^{(3,0)}M,
\end{equation*}
which implies
\begin{equation}
\label{f1q}
\partial _{M}\overline{\partial }_{Q}\omega _{M}\wedge \omega _{M}^{n-2}\in
\ \wedge ^{(0,1)}Q\otimes \wedge ^{(r+1,r-2)}M=\{0\}.
\end{equation}
Similarly, we have
\begin{equation}
\label{f2q}
\overline{\partial }_{M}\partial _{Q}\omega _{M}\wedge \omega _{M}^{r-2}\in
\ \wedge ^{(1,0)}Q\otimes \wedge ^{(r-2,r+1)}M=\{0\}.
\end{equation}
For $r>2$, at the same time, we can deduce that
\begin{equation}
\label{f3q}
\overline{\partial }_{Q}\omega _{M}\wedge \partial _{M}\omega _{M}\wedge
\omega _{M}^{r-3}\in \ \wedge ^{(0,1)}Q\otimes \wedge ^{(r+1,r-2)}M=\{0\}.
\end{equation}
Similarly, we can obtain
\begin{equation}
\label{f4q}
\partial _{Q}\omega _{M}\wedge \overline{\partial }_{M}\omega _{M}\wedge
\omega _{M}^{r-3}\in \ \wedge ^{(1,0)}Q\otimes \wedge ^{(r-2,r+1)}M=\{0\}.
\end{equation}
Since
\begin{equation*}
\sqrt{-1}\partial \overline{\partial }\omega _{M}^{r-1}=(r-1)\sqrt{-1}
\partial \overline{\partial }\omega _{M}\wedge \omega _{M}^{r-2}+\sqrt{-1}
(r-1)(r-2)\partial \omega _{M}\wedge \overline{\partial }\omega _{M}\wedge
\omega _{M}^{r-3}
\end{equation*}
and
\begin{align*}
\partial \omega _{M}=& \partial _{Q}\omega _{M}+\partial _{M}\omega _{M}, \\
\overline{\partial }\omega _{M}=& \overline{\partial }_{Q}\omega _{M}+
\overline{\partial }_{M}\omega _{M},
\end{align*}
using \eqref{1111q}, \eqref{11n1q}, \eqref{f1q}, \eqref{f2q}, \eqref{f3q}  and \eqref{f4q},
we can deduce \eqref{fq}.
\end{proof}
\subsection{Positivity and balanced metrics}
\label{sec_positivity}
In this subsection we discuss the methods of positivity and their relations to balanced metrics.

Assume that $Q$ is a complex manifold with $\dim_{\mathbb{C}}Q=n$. The basic concepts of positivity can be found in for example \cite[Chapter III]{demaillybook1}. A $(p,p)$ form $\varphi$ is said to be positive if for any $(1,0)$ forms $\gamma_j,\,1\leq j\leq n-p$, then
$$
\varphi\wedge\mn\gamma_1\wedge\overline{\gamma_1}\wedge\cdots\wedge\mn \gamma_{n-p}\wedge\overline{\gamma_{n-p}}
$$
is a positive $(n,n)$ form. Any positive $(p,p)$ form $\varphi$ is real, i.e., $\overline{\varphi}=\varphi$. In particular, in the local coordinates, a real $(1,1)$ form
\begin{align}\label{11}
\phi=\mn\phi_{i\overline{j}}\md z^i\wedge\md\overline{z}^j
\end{align}
is positive if and only if $(\phi_{i\overline{j}})$ is a semi-positive Hermitian matrix and we denote $\det \phi:=\det(\phi_{i\overline{j}})$. Similarly, a real $(n-1,n-1)$ form
\begin{align}\label{n-1}
\psi=&(\mn)^{n-1}\sum\limits_{i,j=1}^n(-1)^{\frac{n(n+1)}{2}+i+j+1}\psi^{\overline{j}i}\\
&\md z^1\wedge\cdots\wedge\widehat{\md z^i}\wedge\cdots\wedge\md z^n\wedge \md \overline{z}^1\wedge\cdots\wedge\widehat{\md\overline{z}^j}\wedge\cdots\wedge\md \overline{z}^n\nonumber
\end{align}
is positive if and only if $(\psi^{\overline{j}i})$ is a semi-positive Hermitian matrix and we denote $\det\psi:=\det(\psi^{\overline{j}i})$.
We remark that for $(1,1)$ and $(n-1,n-1)$ forms one also has the
stronger notion of positive definiteness, which is to require that the
Hermitian matrix $(\phi_{i \overline{j}})$ (resp. $(\psi^{\overline{j} i})$) is positive
definite. In this paper, we need this stronger notion and have the following lemma.
\begin{lem}[Michelsohn \cite{Mic82}]
\label{11n1n1}
Let $Q$ be a  complex manifold with $\dim_{\mathbb{C}}Q=n$. Then there exists a bijection from the space of positive definite $(1,1)$ forms to positive definite $(n-1,n-1)$  forms, given by
\begin{align}
\phi\mapsto \frac{\phi^{n-1}}{(n-1)!}.
\end{align}
\end{lem}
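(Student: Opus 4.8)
The plan is to establish the map $\phi \mapsto \phi^{n-1}/(n-1)!$ is a well-defined bijection between positive definite $(1,1)$-forms and positive definite $(n-1,n-1)$-forms by exploiting the pointwise nature of both positivity conditions. Since positive definiteness of a $(1,1)$-form at each point is equivalent to $(\phi_{i\overline{j}})$ being a positive definite Hermitian matrix, and likewise positivity of an $(n-1,n-1)$-form is encoded by the Hermitian matrix $(\psi^{\overline{j}i})$ as in \eqref{n-1}, the entire statement reduces to a fiberwise claim in linear algebra on the Hermitian vector space $(\Lambda^{1,0}T_q^*Q, \langle\cdot,\cdot\rangle)$ at each point $q \in Q$. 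Thus I would first reduce to this pointwise linear-algebra statement, noting that the smoothness of the assignment (and of its inverse) will follow once the pointwise correspondence is shown to be given by algebraic, hence smooth, formulas in the matrix entries.

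For the pointwise statement, I would diagonalize. Given a positive definite $(1,1)$-form $\phi$ at a point, by the spectral theorem for Hermitian forms there is a unitary change of the coframe $\md z^i$ (i.e. a basis $\theta^1,\dots,\theta^n$ of $\Lambda^{1,0}$) in which
\begin{equation*}
\phi = \sqrt{-1}\sum_{i=1}^n \lambda_i\, \theta^i \wedge \overline{\theta}^i, \qquad \lambda_i > 0.
\end{equation*}
Raising $\phi$ to the $(n-1)$-st power and dividing by $(n-1)!$ then gives, by the standard multinomial expansion of the wedge power, a diagonal expression
\begin{equation*}
\frac{\phi^{n-1}}{(n-1)!} = (\sqrt{-1})^{n-1}\sum_{i=1}^n \Bigl(\prod_{j\neq i}\lambda_j\Bigr)\, \theta^1\wedge\cdots\wedge\widehat{\theta^i}\wedge\cdots\wedge\theta^n \wedge \overline{\theta}^1\wedge\cdots\wedge\widehat{\overline{\theta}^i}\wedge\cdots\wedge\overline{\theta}^n,
\end{equation*}
up to the fixed sign $(-1)^{n(n+1)/2}$ absorbed into the convention of \eqref{n-1}. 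Comparing with \eqref{n-1}, the eigenvalues of the associated matrix $(\psi^{\overline{j}i})$ are exactly $\mu_i := \prod_{j\neq i}\lambda_j$, each strictly positive, so $\phi^{n-1}/(n-1)!$ is indeed positive definite.

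To see bijectivity I would exhibit the inverse explicitly. The assignment $(\lambda_1,\dots,\lambda_n)\mapsto(\mu_1,\dots,\mu_n)$ with $\mu_i = \prod_{j\neq i}\lambda_j$ on positive tuples is inverted using $\prod_i \mu_i = (\prod_i \lambda_i)^{n-1}$, so that $\prod_i\lambda_i = (\prod_i\mu_i)^{1/(n-1)}$ and then $\lambda_i = (\prod_k\mu_k)^{1/(n-1)}/\mu_i$; since all $\mu_i>0$ this recovers a unique positive tuple, giving injectivity and surjectivity at each point. The construction is clearly natural under unitary change of coframe, so it patches to a global correspondence independent of the diagonalizing frame, and the resulting formulas are smooth (indeed real-analytic) in the entries, so the inverse map is smooth. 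The main obstacle, and the only genuinely delicate point, is bookkeeping the signs and the ordering of the omitted factors so that the diagonalized expression matches the sign convention $(-1)^{n(n+1)/2 + i + j + 1}$ fixed in \eqref{n-1}; this is a routine but error-prone verification that the wedge-power expansion reproduces precisely that convention, after which the positivity and bijectivity fall out immediately from the spectral picture.
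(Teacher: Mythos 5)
Your proof is correct, but it takes a genuinely different route from the paper's. You prove the lemma by the spectral theorem: diagonalize $\phi$ in a unitary coframe, read off that the eigenvalues of $\phi^{n-1}/(n-1)!$ are $\mu_i=\prod_{j\neq i}\lambda_j$, and invert via $\lambda_i=\bigl(\prod_k\mu_k\bigr)^{1/(n-1)}/\mu_i$. This is essentially Michelsohn's original orthonormal-basis argument, which the paper explicitly notes and deliberately avoids: the paper instead writes closed-form matrix formulas valid in an arbitrary frame, namely that $\phi^{n-1}/(n-1)!$ has coefficient matrix $\det(\phi_{i\overline{j}})\,\tilde{\phi}^{\overline{\ell}k}$ (the adjugate of $\phi$), with inverse map $\xi=\mn\bigl(\det(\psi^{\overline{j}i})\bigr)^{1/(n-1)}\tilde{\psi}_{i\overline{j}}\,\md z^i\wedge\md\overline{z}^j$ as in \eqref{n-1n-111formula}. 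The two are the same computation in disguise --- your eigenvalue inversion is exactly the diagonalization of the paper's formula, since in a diagonalizing frame $\det(\phi)\tilde{\phi}^{\overline{i}i}=\prod_{j\neq i}\lambda_j$ --- but each packaging buys something different. Your spectral picture makes positivity and bijectivity transparent and sidesteps determinant manipulations; on the other hand it forces you to argue separately that the correspondence is frame-independent and smooth (your appeal to unitary naturality is fine, though it would be cleaner to observe that your pointwise inverse agrees with the manifestly smooth matrix expression $(\det\psi)^{1/(n-1)}\psi^{-1}$ in every frame, which settles patching at once). The paper's explicit formula is not a stylistic choice: it is consumed downstream in \eqref{n-1n-111formulayong} and in the corollary to Theorem \ref{thmbalance} to write the balanced metric $\omega_X$ concretely, which a pure diagonalization argument does not directly provide. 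Two small cautions: your parenthetical sign ``$(-1)^{n(n+1)/2}$'' is not quite the factor relating your diagonal expansion to the convention \eqref{n-1} (the reordering of the $n-1$ pairs contributes $(-1)^{(n-1)(n-2)/2}$, and the convention carries $(-1)^{n(n+1)/2+i+j+1}$; e.g.\ for $n=2$ the net sign is $+1$) --- you flag this verification as deferred, and it does come out right, but as stated the exponent is wrong; and both your argument and the paper's implicitly require $n\geq 2$, since for $n=1$ the inversion exponent $1/(n-1)$ and the map itself degenerate.
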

\begin{proof}
For a positive $(1,1)$ form $\phi$ defined as in \eqref{11}, we can deduce a positive $(n-1,n-1)$ form
\begin{align}
\frac{\phi^{n-1}}{(n-1)!}=&(\mn)^{n-1}\sum\limits_{k,\ell=1}^n(-1)^{\frac{n(n+1)}{2}+k+\ell+1}\mathrm{det}(\phi_{i\overline{j}})\tilde{\phi}^{\overline{\ell}k} \nonumber\\
&\md z^{ 1}\wedge\cdots\wedge\widehat{\md z^k}\wedge \cdots\wedge\md z^{n}\wedge\md\overline{z}^{ 1}\wedge\cdots\wedge\widehat{\md \overline{z}^{\ell}}\wedge\cdots\wedge\cdots\wedge \md\overline{z}^{n}\nonumber
\end{align}
where $(\tilde{\phi}^{\overline{\ell}k} )$ is the inverse matrix of $(\phi_{i\overline{j}})$, i.e.,  $\sum\limits_{\ell=1}^n\tilde{\phi}^{\overline{\ell}j}\phi_{k\overline{\ell}}=\delta_{k}^j$.

On the other hand, given a positive $(n-1,n-1)$ form $\psi$ defined as in \eqref{n-1}, there is a positive $(1,1)$ form
\begin{equation}\label{n-1n-111formula}
\xi=\mn\left(\mathrm{det}(\psi^{\overline{j}i})\right)^{\frac{1}{n-1}}\tilde{\psi}_{k\overline{\ell}}\md z^i\wedge\md\overline{z}^j
\end{equation}
such that
$$
\frac{\xi^{n-1}}{(n-1)!}=\psi,
$$
where $(\tilde{\psi}_{k\overline{\ell}} )$ is the inverse matrix of $(\psi^{\overline{j}i})$, i.e.,  $\sum\limits_{\ell=1}^n\psi^{\overline{\ell}j}\tilde{\psi}_{k\overline{\ell}}=\delta_{k}^j$.
\end{proof}
We remark that the above bijection can be found in \cite{Mic82} (cf. \cite{Tomberg}) and proved by orthonormal basis. Our proof here gives the explicit formulae involved.

Assume that $M$ is  complex manifold with $\dim_{\mathbb{C}}M=r$ and $\vartheta$ is a   positive $(1,1)$ form.  In local coordinates, it can be written as
\begin{align}\label{N11}
\vartheta=\mn\sum\limits_{i,j=1}^r \vartheta_{i\overline{j}}\md z^i\wedge\md\overline{z}^j.
\end{align}
Now positive $(1,1)$ form $\phi$ on $Q$ defined in   \eqref{11} and  positive $(1,1)$ form $\vartheta$ on $M$ defined in   \eqref{N11} can be seen as real $(1,1)$ form on $M\times Q$. For any positive function $A,\,B\in C^{\infty}(M\times Q,\mathbb{R})$,  we can deduce that
\begin{align}
A\vartheta^r\wedge\phi^{n-1}+B\vartheta^{r-1}\wedge\phi^n
\end{align}
is positive $(n+r-1,\,n+r-1)$ form on $M\times Q$. Using \eqref{n-1n-111formula}, it is easy to deduce that
\begin{align}\label{n-1n-111formulayong}
\tilde \xi=((n-1)!r!A)^{-\frac{r-1}{n+r-1}}((r-1)!n!B)^{\frac{r}{n+r-1}}\phi+((n-1)!r!A)^{\frac{n}{n+r-1}}((r-1)!n!B)^{-\frac{n-1}{n+r-1}}\vartheta
\end{align}
satisfies
$$
\frac{\tilde\xi^{n+r-1}}{(n+r-1)!}=A\phi^{n-1}\wedge\vartheta^r+B\phi^n\wedge\vartheta^{r-1}.
$$
\begin{defn}
Let $P$ be a  complex manifold with $\dim_{\mathbb{C}}P=p$. Then a  positive $(1,1)$ form $\xi$ on $P$ is called balanced metric if $\mathrm{d}\xi^{p-1}=0$.
\end{defn}
Obviously, the K\"{a}hler metric is balanced. Gray and Hervella observed that on a compact complex manifold $(M,\,\omega)$ with $\dim_{\mathbb{C}}M\geq 3$,
the condition $\md \omega^{k}=0$ for some $2\leq k\leq n-2$ implies that $M$ is K\"{a}her, i.e., $\md\omega=0$.
Indeed, $\md \omega^{k}=0$ implies $\omega^{n-3}\wedge\md\omega=0$, i.e., $L^{n-3}(\md \omega)=0$, where $L$ is the Lefschetz operator defined as wedging by $\omega$. By the Lefschetz decomposition for Hermitian manifolds, it follows that $L^{n-3}:\;\Lambda^{3}M\longrightarrow\Lambda^{2n-3}M$ is bijection and hence $\md\omega=0$. We can also use the fact  $\overline{\partial}\omega\wedge\omega^{n-3}=0$ in this case, and use a routine computation to get $\ast\partial\omega=-\frac{\mn}{(n-3)!}\overline{\partial}\omega\wedge\omega^{n-3}=0$, as required.\footnote{The authors would like to thank Prof. Valentino Tosatti for explaining this point.} Therefore,  it is meaningful to consider the balanced metric on non-K\"{a}hler complex manifolds.

Alessandrini and Bassanelli \cite{AB1,AB2} proved that for a modification $f:\,\tilde M\longrightarrow M$, $\tilde M$ is balanced if and only if $M$ is balanced. Here modification is defined as follows. Let $M$ and $\tilde M$ be complex manifolds (not necessarily compact) with $\dim_{\mathbb{C}}\tilde M=\dim_{\mathbb{C}}M=n$. Then a proper modification $f:\,\tilde M\longrightarrow M$ is a proper holomorphic map such that for a suitable analytic set $Y\subset M$ with $\mathrm{codim}Y\geq 2$ (called the center), $E:=f^{-1}(Y)$ (called the exceptional set of the modification) is a hypersurface and $f|_{\tilde M\backslash E}:\,\tilde M\backslash E\longrightarrow M\backslash Y$ is biholomorphic.

Michelsohn \cite{Mic82} showed that a compact complex manifold is balanced if and only if there exists no non-zero positive current $L$ of degree $(1,1)$ such that $L$ is the $(1,1)$ component of a boundary, i.e., $L=\partial \overline{S}+\overline{\partial}S$ with $S$ degree of $(1,0)$.

By Lemma \ref{11n1n1}, to find a balanced metric, it is sufficient to obtain a $\mathrm{d}$-closed positive $(p-1,p-1)$ form. In the following part, we will use this lemma to construct balanced metrics and remark that in some special branched covering cases, the balanced condition can be preserved.
\subsection{Balanced metrics on higher analogs of twistor spaces of hyper-K\"ahler manifolds}
\label{sec_balanced metric_higher_analog_hyper-Kahler}
It is easy to get a balanced metric if the higher analogs of twistor spaces come from hyper-K\"ahler manifolds. We have \begin{thm}
Let $(X,\,\underline{\mathbf{I}})$ be the higher analog of the twistor space of a  hyper-K\"ahler manifold $(M,\,\mathbf{I},\mathbf{J},\mathbf{K},g)$ with $\dim_{\mathbb{C}}M=r$ and $h:\;Q\longrightarrow \mathbb{P}^1$, where $Q$ is a   K\"ahler manifold with K\"ahler form $\omega_Q$ and $\dim_{\mathbb{C}}Q=n$. Then $\omega_M+t\omega_Q$ is a balanced metric (not K\"{a}hlerian) on $X$, where $t$ is any positive constant.
\end{thm}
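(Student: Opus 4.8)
The plan is to verify the balanced condition $\md\tilde\omega^{\,r+n-1}=0$ for $\tilde\omega:=\omega_M+t\omega_Q$ directly, exploiting the fact that the hyper-K\"ahler hypothesis makes $M$ K\"ahler with respect to $\mathbf{I},\mathbf{J},\mathbf{K}$ simultaneously, so that $\md_M\omega_{\mathbf{I}}=\md_M\omega_{\mathbf{J}}=\md_M\omega_{\mathbf{K}}=0$. The key observation is that the only obstruction to closedness of $\omega_M$ comes from the $Q$-derivatives of the coefficients in \eqref{omegam}, which were already isolated in Lemma \ref{basiclem}. Concretely, since $M$ is hyper-K\"ahler, one has $\md_M\omega_M=0$, so $\md\omega_M=\partial_Q\omega_M+\overline{\partial}_Q\omega_M$, and these two pieces are controlled by \eqref{0120q} and \eqref{1002q}.

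First I would compute $\md\tilde\omega^{\,r+n-1}$ by expanding the power. Because $\omega_Q$ is a K\"ahler form on $Q$ it is $\md$-closed, and because $\pi$ is a holomorphic submersion $\omega_Q$ pulls back to a $\md$-closed form on $X$; hence all derivatives fall on the $\omega_M$ factors. Writing $\tilde\omega^{\,r+n-1}$ as a sum of terms $\binom{r+n-1}{k}\,\omega_M^{\,k}\wedge(t\omega_Q)^{\,r+n-1-k}$ and differentiating, every surviving term contains $\md\omega_M\wedge\omega_M^{\,k-1}\wedge\omega_Q^{\,\cdot}$. The crucial point is a degree/type count on the $M$-factor: $\omega_Q$ has no leg along $M$, so any wedge product of $\omega_M$ (a $(1,1)$ form on $M$) and $\partial_Q\omega_M$ or $\overline\partial_Q\omega_M$ must fit inside $\wedge^{\bullet}M$ with $M$-degree at most $(r,r)$.

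Next I would show each such term vanishes. By \eqref{0120q} the form $\overline\partial_Q\omega_M$ lives in $\wedge^{0,1}Q\otimes\wedge^{2,0}M$; wedging with $\omega_M^{\,k-1}$ contributes $M$-type $(k-1,k-1)$, so the $M$-component sits in $\wedge^{\,k+1,\,k-1}M$. For this to be nonzero after also wedging with powers of $\omega_Q$ (which carry only $Q$-legs), we would need $k+1\le r$ and $k-1\le r$ in the $M$-factor, but the top power $\tilde\omega^{\,r+n-1}$ forces the total $M$-degree to saturate at $(r,r)$ once we account for $\omega_Q$ supplying exactly $Q$-degree $2(r+n-1-k)$; matching bidegrees shows the antiholomorphic $M$-degree $k-1$ can never reach $r$ while the holomorphic part overshoots, so the term is forced into a space of the form $\wedge^{\,r+1,\,r-2}M=\{0\}$, exactly as in \eqref{f1q}. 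The conjugate statement for $\partial_Q\omega_M$ from \eqref{1002q} kills the complementary terms identically. Thus $\md\tilde\omega^{\,r+n-1}=0$.

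The main obstacle I anticipate is the bookkeeping of bidegrees across the product $M\times Q$: one must track simultaneously the $(p,q)$-type on $M$ and on $Q$ and confirm that in the top exterior power every term carrying a derivative of $\omega_M$ is pushed outside the available range $\wedge^{\le r,\le r}M$. Once the type count in \eqref{f1q}--\eqref{f4q} is adapted to include the extra $\omega_Q$ factors, the vanishing is automatic. Finally, to see that $\tilde\omega$ is genuinely not K\"ahler, I would invoke \eqref{1111q}: since $\sqrt{-1}\partial_Q\overline\partial_Q\omega_M=-2(h^{\ast}\omega_{\mathbb{P}^1})\wedge\omega_M\neq 0$ whenever $h$ is non-constant, the form $\omega_M$ is not closed, and the $Q$-K\"ahler part $t\omega_Q$ cannot cancel a term of nonzero $M$-degree, so $\md\tilde\omega\neq 0$.
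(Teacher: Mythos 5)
Your proposal is correct and follows essentially the same route as the paper: since $M$ is hyper-K\"ahler, $\md_M\omega_M=0$ and $\md\omega_Q=0$, so $\md(\omega_M+t\omega_Q)=\partial_Q\omega_M+\overline{\partial}_Q\omega_M$ lies in $\wedge^{1,0}Q\otimes\wedge^{0,2}M+\wedge^{0,1}Q\otimes\wedge^{2,0}M$ by Lemma \ref{basiclem}, and the bidegree count on the expansion of $(\omega_M+t\omega_Q)^{n+r-2}$ kills every term, exactly as in the paper's proof. Your extra justification of non-K\"ahlerity via \eqref{1111q} (for non-constant $h$) is a harmless elaboration of the paper's brief remark that $\md(\omega_M+t\omega_Q)\neq 0$.
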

\begin{proof}
It is sufficient to prove $\mathrm{d}(\omega_M+t\omega_Q)^{n+r-1}=0$.
Note that
\begin{align}
\label{kahlercompu}
\left(\omega_M+t\omega_Q\right)^{n+r-2}
=&t^{n-2}\binom {n+r-2 }{n}\omega_Q^{n-2}\wedge\omega_{M}^{r}+t^{n-1}\binom {n+r-2}{ n-1 }\omega_Q^{n-1}\wedge\omega_{M}^{r-1}\\
&+t^n\binom {n+r-2}{ n-2 }\omega_Q^{n}\wedge\omega_{M}^{r-2}\nonumber,\\
\label{hk1}
\md \left(\omega_M+t\omega_Q\right)
=&\dbar_{Q}\omega_M+\partial_{Q}\omega_M\\
\in&\wedge^{0,1}Q\otimes\wedge^{2,0}M+ \wedge^{1,0}Q\otimes\wedge^{0,2}M,  \nonumber
\end{align}
where for \eqref{hk1} we use that fact that $\mathrm{d}_M\omega_M=\mathrm{d}\omega_Q=0$ and lemma \ref{basiclem}. Note that $\md \left(\omega_M+t\omega_Q\right)\neq0$.
Then from \eqref{kahlercompu} and \eqref{hk1}, we can deduce
\begin{equation*}
\mathrm{d}(\omega_M+t\omega_Q)^{n+r-1}=(n+r-1)(\omega_M+t\omega_Q)^{n+r-2}\wedge \md \left(\omega_M+t\omega_Q\right)=0,
\end{equation*}
as required.
\end{proof}
Note that, in this case, $M$ and $Q$ can be non-compact.

\subsection{Balanced metrics on higher analogs of twistor spaces of compact hypercomplex manifolds and their branched covers}
\label{sec_balanced metric_higher_analog_hypercomplex}

In this subsection, we show that there exist balanced Hermitian metrics on the higher analogs of twistor spaces of compact hypercomplex manifolds and their special branched covers. This case also includes compact hyper-K\"ahler manifolds. By methods of positivity we find their explicit Hermitian metrics.

We first introduce a useful lemma which is slightly different from \cite[Lemma 2]{Tomberg}.
\begin{lem}\label{lem2}
Let $P$ be a compact complex manifold with $\dim_{\mathbb{C}}P=p$ and $\Xi$ be a holomorphic vector bundle with rank $r$. Let $\Xi=E\oplus F$ be a decomposition of $\Xi$ and $H$ and $H'$ be Hermitian forms on $\Xi$. If $H$ restricted on $E$ is strictly positive and $H'$ restricted on $F$ is strictly positive with $E\subset\mathrm{Ker}H'$, i.e., $H'(E,\cdot)=0$, then there exists a positive number $c$ such that $H+cH'$ is strictly positive on $\Xi$.
\end{lem}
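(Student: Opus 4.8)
The plan is to reduce the statement to a pointwise linear-algebra fact and then promote the pointwise constant to a uniform one using compactness of $P$. Fix an auxiliary smooth Hermitian metric $|\cdot|$ on $\Xi$ and let $K\subset\Xi$ be the associated unit sphere bundle $K=\{v\in\Xi:|v|=1\}$. Since $P$ is compact and each fibre sphere is compact, $K$ is compact, and the real-valued functions $v\mapsto H(v,v)$ and $v\mapsto H'(v,v)$ are continuous on $K$.

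First I would record the effect of the hypothesis $E\subset\mathrm{Ker}\,H'$. Writing $v=e+f$ with $e\in E$ and $f\in F$ according to the splitting $\Xi=E\oplus F$, Hermitian symmetry gives $H'(e,w)=0=\overline{H'(w,e)}$ for every $w$, so every term of $H'(e+f,e+f)$ involving $e$ drops out and $H'(v,v)=H'(f,f)$. Because $H'|_F$ is strictly positive, this shows $H'(v,v)\geq 0$ for all $v$, with equality exactly when $f=0$, i.e. when $v\in E$. Hence the vanishing locus of $H'(\cdot,\cdot)$ on $K$ is precisely the unit sphere bundle $S(E)=K\cap E$, and on it we have $v=e\neq 0$ in $E$, so $H(v,v)=H(e,e)>0$ by strict positivity of $H|_E$.

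The core step is then a contradiction argument producing the uniform $c$. Suppose no $c>0$ works; then for each integer $m$ there is $(x_m,v_m)\in K$ with $H(v_m,v_m)+m\,H'(v_m,v_m)\leq 0$. By compactness of $K$ I may pass to a convergent subsequence $v_m\to v_0\in K$. Since $H'(v_m,v_m)\geq 0$, the inequality forces $H(v_m,v_m)\leq 0$; moreover $m\,H'(v_m,v_m)\leq -H(v_m,v_m)$, and since $v\mapsto H(v,v)$ is bounded on the compact set $K$, this yields $H'(v_m,v_m)\to 0$ as $m\to\infty$. Passing to the limit gives $H'(v_0,v_0)=0$ and $H(v_0,v_0)\leq 0$; but $H'(v_0,v_0)=0$ places $v_0\in S(E)$, where the second paragraph shows $H(v_0,v_0)>0$, a contradiction. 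Thus some $c>0$ satisfies $H(v,v)+cH'(v,v)>0$ for every $v\in K$, and by homogeneity of the quadratic form $H+cH'$ is strictly positive on all of $\Xi$.

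The argument is essentially soft, so the only real point to get right is the identification in the second paragraph that the degeneracy set of $H'$ lies inside the region where $H$ is automatically positive; this is what makes the two positivity hypotheses complementary rather than competing, and I regard it as the heart of the proof. An alternative quantitative route avoids the contradiction: from $H(e,e)\geq\lambda|e|^2$, $H'(f,f)\geq\mu|f|^2$, and a uniform bound $|H(e,f)|+|H(f,f)|\leq C(|e|\,|f|+|f|^2)$ on $K$ (with $\lambda,\mu,C>0$ uniform by compactness), one checks that the scalar quadratic form $\lambda a^2-2C a b+(c\mu-C)b^2$ in $a=|e|$, $b=|f|$ is positive definite as soon as $c>\mu^{-1}(C+C^2/\lambda)$, giving the same conclusion with an explicit threshold. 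I would present the compactness version as the main proof, since it requires no estimate bookkeeping.
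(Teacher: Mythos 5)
Your proof is correct, and it takes a genuinely different route from the paper's. The paper argues pointwise in local holomorphic frames: it normalizes $H$ on $E$ and $H'$ on $F$ to the identity, first absorbs $H|_F$ into $cH'$ (so one may assume $H|_F=0$), writes a general vector as $U+tV$ with $U\in E$, $V\in F$, and reduces strict positivity of $H+cH'$ to the discriminant condition $\left[\mathrm{Re}\,H(U,V)\right]^2 < c\,H'(V,V)\,H(U,U)$, which it verifies by Cauchy--Schwarz using a local bound on the off-diagonal entries $H_{i\alpha}$; compactness of $P$ then makes the local constant uniform. You instead run a soft compactness--contradiction argument on the unit sphere bundle: the key observation, which you correctly isolate, is that $E\subset\mathrm{Ker}\,H'$ forces $H'(v,v)=H'(f,f)\geq 0$ with vanishing exactly on $E$, so the degeneracy locus of $H'$ sits inside the region where $H|_E$ is strictly positive, and a sequence violating positivity for $c=m\to\infty$ would converge to a unit vector in $E$ with $H\leq 0$, a contradiction. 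Your version avoids frame computations and estimate bookkeeping entirely (and your handling of the cross terms via Hermitian symmetry is cleaner than the paper's, whose write-up has minor slips such as restricting to real $t$ without comment), at the cost of producing no explicit threshold for $c$; the quantitative alternative you sketch at the end, with the scalar form in $a=|e|$, $b=|f|$ and the explicit bound $c>\mu^{-1}(C+C^2/\lambda)$, is in substance the paper's own argument, so the two approaches together recover both the soft and the effective versions of the lemma.
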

\begin{proof}
We use the ideas from \cite{Tomberg}.
Since the manifold is compact, we just need to prove the conclusion locally. Let $e_1,\cdots,e_{s}$ be the local holomorphic frame basis on $E$ and $e_{s+1},\cdots e_r$ be the local holomorphic frame basis on $F$ such that
\begin{align}
H(e_i,e_j)=\delta_{ij}, \quad H'(e_{\alpha},e_{\beta})=\delta_{\alpha\beta},\quad 1\leq i,\;j\leq s,\quad s+1\leq \alpha,\;\beta\leq r,
\end{align}
where $s$ is the rank of $E$.
Clearly, there exists $c$ such that $\left(H+cH'\right)|_{F}$ is strictly positive. Thus, without loss of generality, we can assume $H|_{F}=0$.
Then for any $$U=\sum\limits_{i=1}^sU^ie_i,\,V=\sum\limits_{\alpha=s+1}^rV^{\alpha}e_{\alpha},$$ we may prove
\begin{align*}
&H(U+tV,U+tV)+cH'(X+tV,U+tV)  \\
=&H(U,U)+2t\mathrm{Re}(H(U,V))+t^2cH'(V,V)>0
\end{align*}
for any $t$, which is equivalent to
\begin{align*}
\left[\mathrm{Re}(H(U,V))\right]^2<cH'(V,V)H(U,U)=c\left(\sum\limits_{i=1}^s|U^i|^2\right)
\left(\sum\limits_{\alpha=s+1}^r|V^{\alpha}|^2\right).
\end{align*}
On the other hand, we have
\begin{align*}
\left[\mathrm{Re}(H(U,V))\right]^2
=&\left[\sum\limits_{1\leq i\leq s,s+1\leq \alpha\leq r}\mathrm{Re}(H_{i\alpha}U^i\overline{V}^{\alpha})\right]^2\\
\leq&\left(\sum\limits_{1\leq i\leq s,s+1\leq \alpha\leq r}|H_{i\alpha}|^2\right)\left(\sum\limits_{i=1}^s|U^i|^2\right)\left(\sum\limits_{\alpha=s+1}^r|V^{\alpha}|^2\right).
\end{align*}
The summation $\left(\sum\limits_{1\leq i\leq s,s+1\leq \alpha\leq r}|H_{i\alpha}|^2\right)$ can be locally bounded by $c$, hence $H+cH'$ is positive.
\end{proof}
\begin{thm}
\label{thmbalance}
 Suppose that $h:\;Q\longrightarrow \mathbb{P}^1$ is a holomorphic map, where $Q$ is a compact complex manifold with balanced metric $\omega_{Q}$ and $\dim_{\mathbb{C}}Q=n$. Let $M$ be a compact hypercomplex manifold with $\dim_{\mathbb{C}}M=r$ and $\omega_M$ defined as in \eqref{omegam}. Then there exists a balanced metric on $(X,\,\underline{\mathbf{I}})$.
\end{thm}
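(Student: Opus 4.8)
The plan is to reduce, via the Michelsohn correspondence of Lemma~\ref{11n1n1}, to the construction of a single $\md$-closed positive definite $(n+r-1,n+r-1)$-form $\Psi$ on $X$: once such a $\Psi$ is found, the inverse bijection writes $\Psi=\tilde\omega^{\,n+r-1}/(n+r-1)!$ for a positive definite $(1,1)$-form $\tilde\omega$, and $\md\Psi=0$ forces $\md\tilde\omega^{\,n+r-1}=0$, so that $\tilde\omega$ is the desired balanced metric. Throughout I regard $\omega_M$ (defined in \eqref{omegam}) and the pull-back of $\omega_Q$ as real $(1,1)$-forms on $X=M\times Q$, and I work with the splitting $T^{1,0}X=T^{1,0}M\oplus\pi^{\ast}T^{1,0}Q$.

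First I would assemble the piece that is positive along $Q$. Put $\Psi_1:=\omega_M^{r}\wedge\omega_Q^{n-1}$. Because $g$ is compatible with every $\mathbf I_M$, the top power $\omega_M^{r}$ equals $r!$ times the Riemannian volume form of $g$; the degree bounds \eqref{0120q}--\eqref{1002q} then give $\md\omega_M^{r}=0$, and balancedness of $\omega_Q$ gives $\md\omega_Q^{n-1}=0$, so $\Psi_1$ is $\md$-closed, with associated Hermitian form strictly positive on $\pi^{\ast}T^{1,0}Q$ and vanishing on $T^{1,0}M$. The natural complementary piece, positive along $M$, would be $\omega_M^{r-1}\wedge\omega_Q^{n}$; but this is never $\md$-closed, since $\md(\omega_M^{r-1}\wedge\omega_Q^{n})=\md_M\omega_M^{r-1}\wedge\omega_Q^{n}$, which does not vanish because a compact hypercomplex $M$ need not be balanced for $\mathbf I_M$.

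The way around this is Lemma~\ref{basiclem}, specifically \eqref{fq}. I would instead take $\Psi_2:=\tfrac{1}{2(r-1)}\,\ddbar\omega_M^{r-1}\wedge\omega_Q^{n-1}$, which is $\md$-closed because $\ddbar\omega_M^{r-1}$ is $\partial\overline{\partial}$-exact and $\omega_Q^{n-1}$ is closed. By \eqref{fq},
\[
\Psi_2=\tfrac{1}{2(r-1)}\,\mn\,\partial_M\overline{\partial}_M\omega_M^{r-1}\wedge\omega_Q^{n-1}+\big(h^{\ast}\omega_{\mathbb{P}^1}\big)\wedge\omega_M^{r-1}\wedge\omega_Q^{n-1}.
\]
The first summand is a top $(r,r)$-form on $M$ wedged with $\omega_Q^{n-1}$, hence equals $\varphi\,\Psi_1$ for a smooth function $\varphi$ on $X$; the second summand $C:=(h^{\ast}\omega_{\mathbb{P}^1})\wedge\omega_M^{r-1}\wedge\omega_Q^{n-1}$ is positive semidefinite, with Hermitian form strictly positive on $T^{1,0}M$ exactly where $h^{\ast}\omega_{\mathbb{P}^1}\neq0$ and vanishing on $\pi^{\ast}T^{1,0}Q$. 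Thus \eqref{fq} exchanges the non-closed honest form $\omega_M^{r-1}\wedge\omega_Q^{n}$ for a genuinely closed form whose $M$-positivity is now carried by $h^{\ast}\omega_{\mathbb{P}^1}$.

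Finally I would set $\Psi:=\lambda\,\Psi_1+\Psi_2=(\lambda+\varphi)\,\Psi_1+C$, which is $\md$-closed for every $\lambda$; by compactness $\varphi$ is bounded, so I fix $\lambda$ with $\lambda+\varphi>0$ on $X$. Then $\Psi$ is the sum of a form strictly positive along $\pi^{\ast}T^{1,0}Q$ and a form positive along $T^{1,0}M$, and Lemma~\ref{lem2}, applied to $\Xi=T^{1,0}X=T^{1,0}M\oplus\pi^{\ast}T^{1,0}Q$, upgrades this to strict positivity of $\Psi$, the large coefficient on the $Q$-part dominating any off-diagonal mixing; feeding $\Psi$ back through Lemma~\ref{11n1n1} yields the balanced metric. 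The step I expect to be the main obstacle is the strict $M$-positivity of $C$: its Hermitian form degenerates precisely on the critical locus $\{h^{\ast}\omega_{\mathbb{P}^1}=0\}$ where $h$ fails to be a submersion. For submersive $h$---which covers the principal compact examples such as $\mathbb{P}(E)\to\mathbb{P}^1$---this locus is empty and the argument closes; in general one must supply extra $M$-positivity there while keeping $\Psi$ closed, and since the only everywhere $M$-positive product form $\omega_M^{r-1}\wedge\omega_Q^{n}$ is never closed, this is the genuinely delicate point. (A related subtlety: \eqref{fq} requires $r>2$; for $r=2$ the expression $\ddbar\omega_M$ carries additional mixed $Q$--$M$ terms, and it is exactly the domination mechanism of Lemma~\ref{lem2} that must absorb them.)
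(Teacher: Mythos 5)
Your reduction via Lemma~\ref{11n1n1}, the closed form $\omega_M^{r}\wedge\omega_Q^{n-1}$ (positive on the factor $F=\Lambda^{r}T^{1,0}M\otimes\Lambda^{n-1}T^{1,0}Q$ of $\Lambda^{n+r-1}T^{1,0}X$ --- the paper applies Lemma~\ref{lem2} to this exterior power rather than to $T^{1,0}X$ itself, but that is only bookkeeping), and the use of \eqref{fq} to trade the never-closed $\omega_M^{r-1}\wedge\omega_Q^{n}$ for the closed $\ddbar\omega_M^{r-1}\wedge\omega_Q^{n-1}$, with the top-degree term on $M$ absorbed into $\varphi\,\omega_M^{r}\wedge\omega_Q^{n-1}$, all match the paper's proof. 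But the point you flag as ``the genuinely delicate point'' is precisely the content of the theorem beyond its Corollary: on the critical set $A=\{q:\md h_q=0\}$ the term $(h^{\ast}\omega_{\mathbb{P}^1})\wedge\omega_M^{r-1}\wedge\omega_Q^{n-1}$ vanishes identically, your $\Psi$ is only semi-positive there, and no choice of constants repairs this. So as written your argument proves only the submersion case, which the paper states separately as a corollary; the general statement remains open in your proposal.

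The missing idea in the paper is that the extra $M$-positivity along $A$ need not come from a globally defined product form at all: it suffices to add \emph{localized, $\ddbar$-exact} correction terms, which are automatically closed and whose negative contributions away from $A$ can be dominated. Concretely, $A$ is an analytic set, so near $q\in A$ one chooses coordinates with $A\cap U_q\subset\{z^1=0\}$, a cut-off $\varphi$ with $\varphi\equiv 1$ on $V_q\Subset U_q$, and sets
\begin{equation*}
\xi=\ddbar\left((1+|z^1|^2)^{t}\varphi\,\omega_M^{r-1}\right),
\end{equation*}
whose restriction to $E$ along $A\cap V_q$ is $\mn\, t\,\omega_M^{r-1}\wedge\md z^1\wedge\md\overline{z}^1$, strictly positive when wedged with $\omega_Q^{n-1}$. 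Compactness of $Q$ gives finitely many $\xi_1,\dots,\xi_\ell$ with the $V_q$'s covering $A$; then a large coefficient $t'$ on $(-1)^{\delta_{2,r}}\ddbar\omega_M^{r-1}$ supplies positivity on $E$ off a neighborhood of $A$, and a large multiple $\gamma$ of $\omega_M^{r}\wedge\omega_Q^{n-1}$ together with Lemma~\ref{lem2} yields the closed positive definite $(n+r-1,n+r-1)$-form $\Omega_X$. Your parenthetical worry about $r=2$ is legitimate but resolves differently than you guess: the mixed terms vanish automatically (since $\Lambda^{3,0}M=0$ when $r=2$), and what changes is the \emph{sign} of the $h^{\ast}\omega_{\mathbb{P}^1}$ contribution --- by \eqref{1111q} one has $\ddbar\omega_M=\mn\partial_M\dbar_M\omega_M-2(h^{\ast}\omega_{\mathbb{P}^1})\wedge\omega_M$, so the paper inserts the factor $(-1)^{\delta_{2,r}}$, i.e.\ uses $-\ddbar\omega_M$ when $r=2$; your $\Psi_2$ as written would be negative along $M$ in that case.
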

\begin{proof}
Note that
\begin{equation*}
\Lambda^{n+r-1}T^{1,0}X=\left(\Lambda^{r-1}T^{1,0}M\otimes\Lambda^{n}T^{1,0}Q\right)
+\left(\Lambda^{r}T^{1,0}M\otimes\Lambda^{n-1}T^{1,0}Q\right)=:E\oplus F.
\end{equation*}
Using \eqref{0120q} and \eqref{1002q}, we get
\begin{align*}
\md \omega_{M}^r=\md_M\omega_M^r+\partial_Q\omega_M^r+\overline{\partial}_Q\omega_M^r
=r\partial_Q\omega_M\wedge\omega_{M}^{r-1}+r\overline{\partial}_Q\omega_M\wedge\omega_{M}^{r-1}=0.
\end{align*}
This together with the fact that $\md \omega_{Q}^{r-1}=0$ implies that
$
\omega_{M}^r\wedge\omega_{Q}^{n-1}
$
is a  closed $(n+r-1,n+r-1)$ form and positive on $F$.  Furthermore, we get
\begin{align*}
E\subset \mathrm{Ker}\left(\omega_{M}^r\wedge\omega_{Q}^{n-1}\right).
\end{align*}
Denote by
$$
A:=\Big\{q:\;\md h_{q}=0\Big\}
$$
the set of critical point of $h$ which is analytic set. Indeed, for any $q\in A$, there exists a coordinate chart $(U;\,z^1,\cdots,z^n)$ such that $$A\cap U=\left\{q\in U:\;\frac{\partial h}{\partial z^1}=\cdots=\frac{\partial h}{\partial z^n}=0\right\}.$$
For any $q\in Q\backslash A$, we have $h^{\ast}\omega_{\mathbb{P}^1}\neq0$ and hence $\left(h^{\ast}\omega_{\mathbb{P}^1}\right)\wedge\omega_{Q}^{n-1}$ is a closed positive $(n,n)$ form which is useful to construct a closed positive $(n+r-1,\,n+r-1)$ form later. However, for any $q\in A$, we have $h^{\ast}\omega_{\mathbb{P}^1}=0$ and hence we need some modification term to obtain a closed positive $(n+r-1,n+r-1)$ form. At this point, without loss of generality, we can choose a local chart $(U_q,\;z^1,\cdots,z^n)$ such that
$$A\cap U_q\subset \{z^1=0\}.$$
Take a cut-off function $\varphi\in C^{\infty}(Q,\mathbb{R})$ such that $\mathrm{supp}\varphi \subset U_q$ and $\varphi|_{V_q}\equiv 1$, where $V_q$ is another open neighborhood of $q$ with $\overline{V_q}\subset U_q$.
Then we can define a closed form
$$
\xi=\ddbar\left((1+|z^1|^2)^{t}\varphi\omega_{M}^{r-1}\right)
$$
with $t$ some positive constant. On the set $\{z^1=0\}$, we have
\begin{align}
\label{localrevise}
\xi=&\mn\varphi t\md z^1\wedge\md \overline{z}^1\wedge\omega_{M}^{r-1}
+\mn\varphi\partial\overline{\partial}\omega_{M}^{r-1}\\
&+\mn\partial_{Q}\varphi\wedge\overline{\partial}\omega_{M}^{r-1}
-\mn\overline{\partial}_{Q}\varphi\wedge\partial\omega_{M}^{r-1}\nonumber\\
=&\mn t\omega_{M}^{r-1}\wedge\md z^1\wedge\md \overline{z}^1
+(-1)^{\delta_{2,r}}2(r-1)\omega_{M}^{r-1}\wedge h^{\ast}\omega_{\mathbb{P}^1}
+\sqrt{-1}\partial_{M}\overline{\partial }_{M}\omega _{M}^{n-1}\nonumber\\
&+\mn\partial_{Q}\varphi\wedge\overline{\partial}\omega_{M}^{r-1}
-\mn\overline{\partial}_{Q}\varphi\wedge\partial\omega_{M}^{r-1}.\nonumber
\end{align}
By Lemma \ref{basiclem}, if follows that
\begin{align}
\label{cutoff1}
\mn\partial_{Q}\varphi\wedge\overline{\partial}\omega_{M}^{r-1}\wedge\omega_{Q}^{n-1}
\in \Lambda^{r,r-2}M\oplus\Lambda^{n,n}Q
+\Lambda^{r-1,r}M\oplus\Lambda^{n,n-1}Q\\
\label{cutoff2}
\mn\overline{\partial}_{Q}\varphi\wedge\partial\omega_{M}^{r-1}\wedge\omega_{Q}^{n-1}
\in\Lambda^{r-2,r}M\oplus\Lambda^{n,n}Q
+\Lambda^{r,r-1}M\oplus\Lambda^{n-1,n}Q
\end{align}
From \eqref{localrevise}, \eqref{cutoff1} and \eqref{cutoff2}, we can deduce that  on $A\cap U_q$
$$
\left.\xi\wedge \omega_{Q}^{n-1}\right|_{E}=\mn \varphi t\omega_{M}^{r-1}\wedge\md z^1\wedge\md \overline{z}^1\wedge \omega_{Q}^{n-1}
$$
is non-negative, and that on $A\cap V_q$,
$$
\left.\xi\wedge \omega_{Q}^{n-1}\right|_{E}=\mn t\omega_{M}^{r-1}\wedge\md z^1\wedge\md \overline{z}^1\wedge \omega_{Q}^{n-1}
$$
is positive.
Since $Q$ is compact, we can choose finite such $\xi$ with these $V_q$'s covering $A$, denoted by $\xi_1,\cdots,\xi_{\ell}$, to obtain that
$$
\left.\left(t'(-1)^{\delta_{2,r}}\ddbar\omega_{M}^{r-1}
+\sum\limits_{i=1}^{\ell}\xi_i\right)\wedge\omega_{Q}^{r-1}\right|_{E}
$$
is positive with sufficiently large $t'$. Applying Lemma \ref{lem2} to $\Lambda^{n+r-1}T^{1,0}X$, $\omega_{M}^r\wedge\omega_{Q}^{n-1} $ and the $(n+r-1,\,n+r-1)$ form obtained right now,
it follows that there exists a sufficiently large number $\gamma$ such that
$$
\Omega_{X}:=\gamma \omega_{M}^r\wedge\omega_{Q}^{n-1}
+\left(t'(-1)^{\delta_{2,r}}\ddbar\omega_{M}^{r-1}+\sum\limits_{i=1}^{\ell}\xi_i\right)\wedge\omega_{Q}^{n-1}
$$
is a closed positive $(n+r-1,n+r-1)$ form. Then Lemma \ref{11n1n1} implies that there exists a balanced metric on $(X,\underline{\mathbf{I}})$.
\end{proof}
\begin{cor}
Under the same setup as in Theorem \ref{thmbalance}, and furthermore assume that $h$ is holomorphic submersion. Then there exists a balanced metric on $(X,\,\underline{\mathbf{I}})$.
\end{cor}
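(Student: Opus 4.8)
The plan is to recognize that the corollary is simply the special case of Theorem \ref{thmbalance} in which the critical set $A$ is empty, so the entire technical apparatus built to handle critical points collapses. Since $h$ is a holomorphic submersion, the differential $\md h_q$ is surjective (hence nonzero) at every point $q\in Q$, which means $A=\{q:\md h_q=0\}=\emptyset$. Consequently $h^{\ast}\omega_{\mathbb{P}^1}\neq 0$ everywhere on $Q$, so the cut-off correction terms $\xi_1,\dots,\xi_{\ell}$ introduced in the proof of Theorem \ref{thmbalance} to fix positivity over the locus where $h^{\ast}\omega_{\mathbb{P}^1}$ degenerates are no longer needed.

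Concretely, I would run the same argument as in Theorem \ref{thmbalance} but with a cleaner positive form. As before, decompose $\Lambda^{n+r-1}T^{1,0}X=E\oplus F$ and note from \eqref{0120q} and \eqref{1002q} that $\md\omega_M^r=0$, so that together with $\md\omega_Q^{n-1}=0$ the form $\omega_M^r\wedge\omega_Q^{n-1}$ is closed and strictly positive on $F$, with $E\subset\mathrm{Ker}(\omega_M^r\wedge\omega_Q^{n-1})$. For the $E$-direction, because $A=\emptyset$ we have $h^{\ast}\omega_{\mathbb{P}^1}>0$ at every point, so $\left(h^{\ast}\omega_{\mathbb{P}^1}\right)\wedge\omega_Q^{n-1}$ is a \emph{globally} closed positive $(n,n)$ form and the combination $\left((-1)^{\delta_{2,r}}\ddbar\omega_M^{r-1}\right)\wedge\omega_Q^{n-1}$, which by \eqref{fq} contributes $2(r-1)\left(h^{\ast}\omega_{\mathbb{P}^1}\right)\wedge\omega_M^{r-1}\wedge\omega_Q^{n-1}$ on $E$ (the $\partial_M\overline{\partial}_M$ piece vanishing against $E$ by type considerations as in \eqref{f1q}--\eqref{f4q}), is strictly positive on $E$ without any cut-off modification.

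With these two pieces in hand I would apply Lemma \ref{lem2} to $\Lambda^{n+r-1}T^{1,0}X$ with the decomposition $E\oplus F$, taking $H'=\omega_M^r\wedge\omega_Q^{n-1}$ (strictly positive on $F$ and annihilating $E$) and $H=\left((-1)^{\delta_{2,r}}\ddbar\omega_M^{r-1}\right)\wedge\omega_Q^{n-1}$ (strictly positive on $E$). The lemma yields a constant $\gamma>0$ such that
\begin{equation*}
\Omega_X:=\gamma\,\omega_M^r\wedge\omega_Q^{n-1}+\left((-1)^{\delta_{2,r}}\ddbar\omega_M^{r-1}\right)\wedge\omega_Q^{n-1}
\end{equation*}
is a closed positive $(n+r-1,n+r-1)$ form on $X$; closedness is immediate since both summands are $\md$-closed. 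Finally, Lemma \ref{11n1n1} converts $\Omega_X$ into a balanced metric on $(X,\underline{\mathbf{I}})$, which is what we want.

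I do not expect any genuine obstacle here: the submersion hypothesis is strictly stronger than what the main theorem requires, so the corollary is a direct corollary and the only thing worth spelling out is the observation $A=\emptyset\Rightarrow h^{\ast}\omega_{\mathbb{P}^1}>0$ everywhere, which removes the need for the cut-off functions. The mild point to keep in mind is the sign factor $(-1)^{\delta_{2,r}}$ coming from the $r=2$ versus $r>2$ distinction in \eqref{fq}; one should simply carry it through so that the $E$-component stays positive in all dimensions.
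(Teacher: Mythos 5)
Your proposal is correct and follows essentially the same route as the paper: the submersion hypothesis gives $A=\emptyset$, so the cut-off corrections $\xi_i$ drop out, and applying Lemma \ref{lem2} to the decomposition $E\oplus F$ with $\omega_M^r\wedge\omega_Q^{n-1}$ and $(-1)^{\delta_{2,r}}\ddbar\omega_M^{r-1}\wedge\omega_Q^{n-1}$, then Lemma \ref{11n1n1}, yields the balanced metric exactly as in the paper's proof (which then devotes most of its length to the \emph{explicit} formula for $\omega_X$ via the Chern-torsion quantities $\Phi,\Psi$ and \eqref{n-1n-111formulayong} --- extra content not needed for the bare existence statement). One small wording fix: for $n>1$ the form $h^{\ast}\omega_{\mathbb{P}^1}$ is only semipositive of rank one, not strictly positive pointwise; what you actually use --- that it is nonvanishing, so $\left(h^{\ast}\omega_{\mathbb{P}^1}\right)\wedge\omega_Q^{n-1}$ is a strictly positive $(n,n)$ form --- is the correct statement.
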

\begin{proof}
In this setup, $A=\emptyset$, hence the conclusion is obvious and we have
\begin{equation}
\Omega_{X}=\gamma \omega_{M}^r\wedge\omega_{Q}^{n-1}
+(-1)^{\delta_{2,r}}\ddbar\omega_{M}^{r-1} \wedge\omega_{Q}^{n-1}.
\end{equation}
Moreover, we can give the concrete expression of the balanced metric $\omega_X$ using \eqref{n-1n-111formulayong}.
To see this, we consider the Chern connection on $(M,\mathbf{I}_M,\,\omega_M)$. Using  the local coordinates $(U,w^1,\cdots, w^r)$, we write
$$\omega_M=\mn g_{i\overline{j}}\md w^i\wedge\md \overline{w}^j.$$
Note that for any real $(1,1)$ form $\chi=\mn \chi_{i\overline{j}}\md w^i\wedge\md \overline{w}^j$, we have
\begin{equation}
\label{traceformula}
\left(\tr_{\omega_M}\chi\right)\omega_M^r:=\left(g^{\overline{j}i}\chi_{i\overline{j}}\right)\omega_M^r=r\chi\wedge\omega_M^{r-1}.
\end{equation}
Then the Christoffel symbol of the Chern connection is $\Gamma_{ij}^k=g^{\overline{q}k}\partial_ig_{j\overline{q}}$, where and henceforth we denote by $\partial_i$ the partial derivative $\partial/\partial w^i$.

We define the torsion of the Chern connection by
$$
\quad T_{ij}^k=\Gamma_{ij}^k-\Gamma_{j i}^k=g^{\overline{q}k}\left(\partial_ig_{j\overline{q}}
-\partial_jg_{i\overline{q}}\right).
$$
and often lower its upper index using $\omega_M$, writing
$$
T_{ij\overline{\ell}}:=T_{ij}^kg_{k\overline{\ell}}=\partial_ig_{j\overline{\ell}}
-\partial_jg_{i\overline{\ell}}.
$$
Then we get
\begin{align}
\label{partialomega}
\overline{\partial}_M\omega
=&\mn \partial_{\overline{\ell}}g_{i\overline{j}}\md w^i\wedge\md \overline{w}^j\wedge\md \overline{w}^{ \ell }\\
=&\frac{\mn}{2}\left(\partial_{\overline{\ell}}g_{i\overline{j}}-\partial_{\overline{j}}g_{i\overline{\ell}}\right)\md w^i\wedge\md \overline{w}^j\wedge\md \overline{w}^{ \ell }\nonumber\\
=&\frac{\mn}{2}\overline{T_{\ell j\overline{i}}}\md w^i\wedge\md \overline{w}^j\wedge\md \overline{w}^{ \ell }\nonumber
\end{align}
Similarly, we have
\begin{align}
\label{partialbaromega}
\partial_M\omega=\frac{\mn}{2}T_{ij\overline{\ell}}\md w^i\wedge\md w^j\wedge\md \overline{w}^{\ell}
\end{align}
and
\begin{align}
\label{ddbaromega}
\mn\partial_M\overline{\partial}_M\omega=\frac{(\mn)^2}{2!2!}\left(\partial_{\overline{k}}T_{ij\overline{\ell}}-\partial_{\overline{\ell}}T_{ij\overline{k}}\right)
\md w^i\wedge\md w^j\wedge\md\overline{w}^k\wedge\md \overline{w}^{\ell}.
\end{align}
Moreover, using \eqref{partialomega} and \eqref{partialbaromega}, for $r>2$, we can deduce
\begin{align}
\label{partialomegadeng}
\mn\partial_M\omega\wedge\dbar_M\omega\wedge\omega^{r-3}=\frac{\Psi}{r(r-1)(r-2)} \omega^r
\end{align}
where
$$
\Psi=g^{\overline{j}i}g^{\overline{q}p}g^{\overline{\ell}k}T_{ip\overline{\ell}}\overline{T_{jq\overline{k}}}
-T_{ip}^p\overline{T_{jq}^q}g^{\overline{j}i}.
$$
Thanks to \eqref{ddbaromega}, it follows that
\begin{equation}
\label{ddbaromegadeng}
\mn\partial_M\overline{\partial}_M\omega\wedge\omega^{r-2}=\frac{\Phi}{r(r-1)}\omega^r,
\end{equation}
where
$$
\Phi=g^{\overline{\ell}i}g^{\overline{k}j}\left(\partial_{\overline{k}}T_{ij\overline{\ell}}-\partial_{\overline{\ell}}T_{ij\overline{k}}\right)
-g^{\overline{\ell}j}g^{\overline{k}i}\left(\partial_{\overline{k}}T_{ij\overline{\ell}}-\partial_{\overline{\ell}}T_{ij\overline{k}}\right).
$$
By \eqref{partialomegadeng}, \eqref{ddbaromegadeng} and Lemma \ref{basiclem}, it follows that
$$
\ddbar\omega_{M}^{r-1}=\frac{\Phi+\frac{1+(-1)^{\delta_{2,r}}}{2}\Psi}{r}\omega_{M}^r
+2(-1)^{\delta_{2,r}}(r-1)\left(h^{\ast}\omega_{\mathbb{P}^1}\right)\wedge\omega_{M}^{r-1}.
$$
Hence we have
\begin{align*}
 \Omega_X=&\gamma \omega_M^r\wedge\omega_{Q}^{n-1}+ (-1)^{\delta_{2,r}}\ddbar(\omega_{M}^{r-1})\wedge\omega_{Q}^{n-1}\\
 =&\left(\gamma+\frac{\Phi+\frac{1+(-1)^{\delta_{2,r}}}{2}\Psi}{r}\right)\omega_{M}^r
 \wedge\omega_{Q}^{n-1}+\frac{2(r-1)\tr_{\omega_Q}\left(h^{\ast}\omega_{\mathbb{P}^1}\right)}{r}\omega_{M}^{r-1}
 \wedge \omega_{Q}^n,
\end{align*}
where we use the analog of \eqref{traceformula} on $Q$.

Using \eqref{n-1n-111formulayong}, we get
\begin{align*}
\omega_X
 =&\left[\left(\gamma+\frac{\Phi+\frac{1+(-1)^{\delta_{2,r}}}{2}\Psi}{r}\right)(n-1)!r!\right] ^{\frac{n}{n+r-1}}\left(\frac{2(r-1)\tr_{\omega_Q}\left(h^{\ast}\omega_{\mathbb{P}^1}\right)}{r}  (r-1)!n!\right)^{-\frac{n-1}{n+r-1}}\omega_{M}  \\
 &+\left[\left(\gamma+\frac{\Phi+\frac{1+(-1)^{\delta_{2,r}}}{2}\Psi}{r}\right)(n-1)!r!\right]  ^{-\frac{r-1}{n+r-1}}\left(\frac{2(r-1)\tr_{\omega_Q}\left(h^{\ast}\omega_{\mathbb{P}^1}\right)}{r}   (r-1)!n!\right)^{\frac{r}{n+r-1}} \omega_{Q},
\end{align*}
with $\frac{\omega_{X}^{n+r-1}}{(n+r-1)!}=\Omega_X$.
\end{proof}
Let $\bar{h}:\,\bar Q\longrightarrow Q$ be a branched double cover along the smooth divisor $S$. Then by the new holomorphic map  $h\circ \bar h:\,\bar Q\longrightarrow \mathbb{P}^1$, we get a new complex manifold $\bar X=M\times \bar Q$. This can be seen as a branched double cover of $X$ along the divisor $\pi^{-1}(S)$. By \cite[Proposition 4.1.6]{positivity1}, $\bar h^{-1}(S)$ is also a smooth divisor. Theorem \ref{thmbalance} implies that $\bar X$ is also balanced.

Let $(M,\,\mathbf{I},\mathbf{J},\mathbf{K},g)$ be a hypercomplex manifold, not necessarily compact, with holonomy group $\mathrm{Hol}(\nabla)\subset SL(n,\mathbb{H})$, where $\nabla$ is the Obata connection, or a hyper-K\"{a}hler manifold, not necessarily compact.  Then there exists a countable set $B\subset \mathbb{S}^2$ biholomorphic to $\mathbb{P}^1$, such that for any $(a,b,c)\in \mathbb{S}^2 \setminus B$, we can deduce that $(M, a\mathbf{I}+b\mathbf{J}+c\mathbf{K})$ has no compact divisors (see \cite{sove,verbitsky}).  Therefore, let $X$ be the higher dimensional analogs of twistor spaces constructed from compact hyper-K\"{a}hler manifold or compact hypercomplex manifold $M$ with holonomy group $\mathrm{Hol}(\nabla)\subset SL(n,\mathbb{H})$, and let $D\subset X$ be a smooth divisor and hence a complex sub-manifold. If $(\tilde{\varsigma}\circ h\circ \pi)|_{D}$ is a non-constant map, then it is a surjective holomorphic map and for any $(a,b,c)\in \mathbb{S}^2 \setminus B$, we know that $D\cap \left(\pi^{-1}\circ h^{-1}\circ \tilde{\varsigma}^{-1}(a,b,c)\right)$ is $(M, a\mathbf{I}+b\mathbf{J}+c\mathbf{K})$ itself  or that it satisfies $\mathrm{codim}\Big(D\cap \left(\pi^{-1}\circ h^{-1}\circ \tilde{\varsigma}^{-1}(a,b,c)\right)\Big) \leq 2$, and the latter case is impossible since we have $\mathrm{codim}D=1$. The same conclusion holds when $(\tilde{\varsigma}\circ h\circ \pi)|_{D}$ is a constant map and $(\tilde{\varsigma}\circ h\circ \pi) (D)\subset \mathbb{S}^2 \setminus B$. From this point, our choices of the divisors to construct branched double covers are not so limited.
\subsection{Non-K\"{a}hlerity}
\label{sec_non-Kahlerity}
In this subsection, we show that the higher dimensional analogs $X$ are not K\"ahler.
\begin{thm}\label{nonkahler}
 Suppose that $h:\;Q\longrightarrow \mathbb{P}^1$ is a holomorphic map, where $Q$ is a compact complex manifold with balanced metric $\omega_{Q}$ and $\dim_{\mathbb{C}}Q=n$. Let $M$ be a compact hypercomplex manifold with $\dim_{\mathbb{C}}M=r$ and $\omega_M$ defined as in \eqref{omegam}. Then  $(X,\,\underline{\mathbf{I}})$ can not be K\"{a}hlerian.
\end{thm}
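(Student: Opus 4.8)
The plan is to argue by contradiction: assume $(X,\uci)$ carries a Kähler form $\tilde\omega$ and exhibit a quantity that positivity forces to be strictly positive while Stokes' theorem forces it to vanish. First I would dispose of the degenerate case. If $h$ is constant then $\uci$ is a genuine product complex structure and the statement can fail, so the relevant case is $h$ non-constant; since $Q$ is compact this makes $h$ surjective, and $h^{\ast}\omega_{\mathbb{P}^1}$ is then a non-negative $(1,1)$-form that is strictly positive on the dense open set $Q\setminus A$, where $A$ is the analytic set of critical points of $h$ introduced in the proof of Theorem \ref{thmbalance}.

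The heart of the argument is identity \eqref{fq} of Lemma \ref{basiclem}. Since $\ddbar\omega_M^{r-1}=\md(\mn\dbar\omega_M^{r-1})$ is $\md$-exact and $\tilde\omega^{\,n}$ is $\md$-closed, Stokes' theorem gives $\int_X\ddbar\omega_M^{r-1}\wedge\tilde\omega^{\,n}=0$. Substituting \eqref{fq} yields
\[
0=\int_X\mn\,\partial_M\dbar_M\omega_M^{r-1}\wedge\tilde\omega^{\,n}+2(r-1)\int_X\bigl(h^{\ast}\omega_{\mathbb{P}^1}\bigr)\wedge\omega_M^{r-1}\wedge\tilde\omega^{\,n}.
\]
The second integrand is a wedge of the semi-positive form $h^{\ast}\omega_{\mathbb{P}^1}$, the semi-positive form $\omega_M^{r-1}$, and the strictly positive form $\tilde\omega^{\,n}$; by the positivity calculus of Section \ref{sec_positivity} it is a non-negative top form that is strictly positive over $\pi^{-1}(Q\setminus A)$, so the second integral is $>0$. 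A contradiction therefore follows as soon as the first integral is shown to be non-negative.

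Controlling that first integral is the main obstacle. Because $\omega_M^{r-1}$ has all its legs along the fibres $M$, the form $\mn\,\partial_M\dbar_M\omega_M^{r-1}$ is of vertical type, i.e.\ of top degree along $\pi^{-1}(q)\cong M$, and is fibrewise $\md_M$-exact. When $M$ is hyper-K\"ahler this term vanishes identically: every $\omega_M$ is then $\md_M$-closed, so $\partial_M\dbar_M\omega_M^{r-1}=0$, and the contradiction $0=2(r-1)\cdot(\text{positive})$ is immediate. This settles the hyper-K\"ahler case and, more generally, the case of balanced fibre metrics. For a general compact hypercomplex $M$ one has $\mn\,\partial_M\dbar_M\omega_M^{r-1}=F\,\omega_M^{r}$ with $F$ the torsion function computed in the Corollary to Theorem \ref{thmbalance}, and the task reduces to showing $\int_X F\,\omega_M^{r}\wedge\tilde\omega^{\,n}\ge 0$. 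Here I would integrate first along the compact fibres: since $F\omega_M^{r}=\mn\,\partial_M\dbar_M\omega_M^{r-1}$ is $\md_M$-exact of top vertical degree, its fibre integral against any coefficient pulled back from $Q$ vanishes, so the only obstruction is the variation along $M$ of the vertical part of $\tilde\omega^{\,n}$. This is exactly where the cut-off and positivity technique of Sections \ref{sec_a basic lemma}–\ref{sec_positivity}, used to build the closed positive form $\Omega_X$, must be brought in to replace $\tilde\omega^{\,n}$ by a test form whose vertical component is effectively constant along the fibres.

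Equivalently, and this is the formulation I would actually write up, the argument can be phrased through Michelsohn's duality (Lemma \ref{11n1n1}) and the current-theoretic characterization of K\"ahler manifolds: it suffices to produce a non-zero positive $\ddbar$-exact (hence boundary) $(n+r-1,\,n+r-1)$-current, for its pairing with $\tilde\omega$ would be simultaneously $>0$ by positivity and $=0$ by Stokes. The natural candidate is
\[
\ddbar\bigl(\omega_M^{r-1}\wedge\omega_Q^{n-1}\bigr)=F\,\omega_M^{r}\wedge\omega_Q^{n-1}+2(r-1)\bigl(h^{\ast}\omega_{\mathbb{P}^1}\bigr)\wedge\omega_M^{r-1}\wedge\omega_Q^{n-1},
\]
whose second summand supplies the semi-positive, nowhere-trivial piece produced by \eqref{fq}; the entire difficulty is once more to dominate the vertical torsion term $F\,\omega_M^{r}\wedge\omega_Q^{n-1}$ while remaining a boundary, which is vacuous for hyper-K\"ahler $M$ and in general is absorbed by the same cut-off construction as in Theorem \ref{thmbalance}.
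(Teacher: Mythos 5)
Your Stokes--positivity mechanism is sound as far as it goes, and in the hyper-K\"ahler case it essentially coincides with the paper's computation (the paper wedges $\ddbar\omega_M=-2\left(h^{\ast}\omega_{\mathbb{P}^1}\right)\wedge\omega_M$, from \eqref{1111q}, against $\omega_X^{n+r-2}$, rather than $\ddbar\omega_M^{r-1}$ against $\tilde\omega^{\,n}$, but it is the same argument; your remark that non-constancy of $h$ is implicitly required is also correct, since for constant $h$ one has $A=Q$ and the strict positivity off $\pi^{-1}(A)$ degenerates). The genuine gap is the general hypercomplex case, which your proposal leaves open: you never establish $\int_X\mn\,\partial_M\dbar_M\omega_M^{r-1}\wedge\tilde\omega^{\,n}\ge 0$. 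The fibre-integration idea fails for the reason you half-acknowledge yourself: $\mn\,\partial_M\dbar_M\omega_M^{r-1}$ is $\md_M$-exact of top vertical degree, but the horizontal component of $\tilde\omega^{\,n}$ it pairs with is \emph{not} pulled back from $Q$, so the fibre integral need not vanish --- and in a proof by contradiction the K\"ahler form is given to you; there is no licence to ``replace $\tilde\omega^{\,n}$ by a test form.'' The current-theoretic variant has the same hole: $\ddbar\left(\omega_M^{r-1}\wedge\omega_Q^{n-1}\right)$ is indeed $\ddbar$-exact (the cross terms die because $\md\omega_Q^{n-1}=0$ forces $\partial\omega_Q^{n-1}=\dbar\omega_Q^{n-1}=0$), but it is not positive: its component on $\Lambda^{r}T^{1,0}M\otimes\Lambda^{n-1}T^{1,0}Q$ carries the torsion function, which can be negative, and the cut-off construction of Theorem \ref{thmbalance} cannot ``absorb'' this --- positivity there was achieved only by adding $\gamma\,\omega_M^{r}\wedge\omega_Q^{n-1}$ via Lemma \ref{lem2}, and that term is closed but not exact, so the exact part of $\Omega_X$ is not positive and yields no contradiction.

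The missing idea, and the way the paper closes exactly the case you could not, is a reduction rather than an estimate: if $X$ were K\"ahler, then each fibre $\pi^{-1}(q)\cong(M,\mathbf{I}_M(q))$ is a compact complex submanifold of a K\"ahler manifold, hence K\"ahler, and by Verbitsky's theorem \cite{vergafa} a compact hypercomplex manifold of K\"ahler type is hyper-K\"ahler. One may therefore assume from the outset that $\omega_M$ is fibrewise K\"ahler, whereupon the vertical torsion term vanishes identically and your own (correct) hyper-K\"ahler argument finishes the proof. In other words, the general hypercomplex case does not require the torsion term to be dominated; the K\"ahler hypothesis itself annihilates it.
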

\begin{proof}
We use proof by contradiction. Assume that $X$ is K\"{a}hlerian and $\omega_{X}$ is the K\"{a}hler form on it. Since for every $q\in Q$ fixed, $M$ can be seen as a fiber $\pi^{-1}(q)$ of $X$, if follows that $M$ is K\"{a}herian, and hence is hyperK\"{a}hlerian by the result in \cite{vergafa}. Without loss of generality, we can still assume that $\omega_M$ is a K\"{a}hler form on it, and hence \eqref{1111q} implies
$$
\ddbar\omega_M=\sqrt{-1}\partial_{Q} \overline{\partial}_{Q}\omega_M=-2 \left(h^{\ast}\omega_{\mathbb{P}^1}\right)\wedge \omega_M.
$$
Then we can deduce that
$$
-\left(\ddbar\omega_M\right)\wedge \omega_{X}^{n+r-2}
$$
is a real nonnegative $(n+r,\,n+r)$ form on $X$ and is a strictly positive $(n+r,\,n+r)$ form on $X\backslash\pi^{-1}(A)$, where $A$ is defined as in the proof of Theorem \ref{thmbalance}, and $\pi^{-1}(A)$ has zero $(n+r)$ measure. Thus, using Stokes' theorem, we have
$$
0<\int_{X} -\left(\ddbar\omega_M\right)\wedge \omega_{X}^{n+r-2}=-\int_{X}\md\left[\left( \mn\dbar \omega_M\right)\wedge \omega_{X}^{n+r-2}\right]=0,
$$
which leads to a contradiction. Hence the proof is completed.
\end{proof}
The proof of Theorem \ref{nonkahler} yields that
\begin{thm}
Suppose that $h:\;Q\longrightarrow \mathbb{P}^1$ is a holomorphic map, where $Q$ is a compact complex manifold with balanced metric $\omega_{Q}$ and $\dim_{\mathbb{C}}Q=n$. Let $M$ be a compact hyper-K\"{a}hler manifold with $\dim_{\mathbb{C}}M=r$ and $\omega_M$ defined as in \eqref{omegam}. Then  $(X,\,\underline{\mathbf{I}})$ can not be K\"{a}hlerian.
\end{thm}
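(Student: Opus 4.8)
The plan is to transcribe the proof of Theorem \ref{nonkahler} with one simplification: because $M$ is now hyper-K\"{a}hler by hypothesis, I do not need to invoke \cite{vergafa} to upgrade a fiberwise K\"{a}hler structure to a hyper-K\"{a}hler one, and in fact the basic differential identity I need holds unconditionally on $X$, not merely under a K\"{a}hler assumption. First I would record that, since $(M,\mathbf{I},\mathbf{J},\mathbf{K},g)$ is hyper-K\"{a}hler, the three forms $\omega_{\mathbf{I}},\omega_{\mathbf{J}},\omega_{\mathbf{K}}$ are all $\mathrm{d}_M$-closed. As $\omega_M$ is, by \eqref{omegam}, an $\mathbb{R}$-linear combination of these three with coefficients that depend only on $\zeta(z)$, hence only on $Q$, it follows that $\partial_M\omega_M=\overline{\partial}_M\omega_M=0$.

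Next I would feed this into the decomposition $\partial\overline{\partial}=\partial_M\overline{\partial}_M+\partial_M\overline{\partial}_Q+\partial_Q\overline{\partial}_M+\partial_Q\overline{\partial}_Q$ and check that only the last term survives. The fiber term and $\partial_Q\overline{\partial}_M\omega_M$ vanish immediately from $\overline{\partial}_M\omega_M=0$. For $\partial_M\overline{\partial}_Q\omega_M$, recall from the proof of Lemma \ref{basiclem} that $\overline{\partial}_Q\omega_M=\overline{\partial}_Q\overline{\zeta}\wedge\sigma_M$ with $\sigma_M\in\wedge^{2,0}M$ a $\zeta$-dependent combination of $\omega_{\mathbf{I}},\omega_{\mathbf{J}},\omega_{\mathbf{K}}$; since $\mathrm{d}_M\sigma_M=0$ by hyper-K\"{a}hlerity, its bidegree components $\partial_M\sigma_M\in\wedge^{3,0}M$ and $\overline{\partial}_M\sigma_M\in\wedge^{2,1}M$ both vanish, whence $\partial_M\overline{\partial}_Q\omega_M=0$. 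Combining this with \eqref{1111q} yields the global identity on $X$
\begin{equation*}
\ddbar\omega_M=\mn\,\partial_Q\overline{\partial}_Q\omega_M=-2\left(h^{\ast}\omega_{\mathbb{P}^1}\right)\wedge\omega_M .
\end{equation*}

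Finally I would argue by contradiction: suppose $X$ admits a K\"{a}hler form $\omega_X$, and consider
\begin{equation*}
-\left(\ddbar\omega_M\right)\wedge\omega_X^{n+r-2}=2\left(h^{\ast}\omega_{\mathbb{P}^1}\right)\wedge\omega_M\wedge\omega_X^{n+r-2}.
\end{equation*}
Since $\omega_M$ restricts to a K\"{a}hler form on each fiber $\pi^{-1}(q)$, $\omega_X$ is K\"{a}hler, and $h^{\ast}\omega_{\mathbb{P}^1}$ is a nonnegative $(1,1)$-form strictly positive off the critical set $A=\{\mathrm{d}h=0\}$, this $(n+r,n+r)$-form is real, nonnegative on $X$, and strictly positive on $X\setminus\pi^{-1}(A)$. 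As in the proof of Theorem \ref{thmbalance}, $A$ is a proper analytic subset (here the non-constancy of $h$ enters), so $\pi^{-1}(A)$ has measure zero and the integral over the compact manifold $X$ is strictly positive. On the other hand, $\mathrm{d}\omega_X=0$ gives $-\left(\ddbar\omega_M\right)\wedge\omega_X^{n+r-2}=-\mathrm{d}\left[\left(\mn\,\overline{\partial}\omega_M\right)\wedge\omega_X^{n+r-2}\right]$, so Stokes' theorem forces this integral to vanish, a contradiction.

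The only step demanding genuine care is the reduction $\ddbar\omega_M=\mn\,\partial_Q\overline{\partial}_Q\omega_M$ in the second paragraph; once the mixed terms are seen to vanish via $\mathrm{d}_M\omega_M=0$ and the bidegree structure of Lemma \ref{basiclem}, the remainder is a routine transcription of Theorem \ref{nonkahler}, and the hyper-K\"{a}hler hypothesis is exactly what makes the passage to the identity above cleaner than in the purely hypercomplex case.
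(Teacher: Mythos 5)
Your proof is correct and takes essentially the same route as the paper: the paper derives this theorem directly from the proof of Theorem \ref{nonkahler}, and your transcription, which drops the appeal to \cite{vergafa} because the hyper-K\"ahler hypothesis already gives $\mathrm{d}_M\omega_M=0$ and hence the identity $\ddbar\omega_M=-2\left(h^{\ast}\omega_{\mathbb{P}^1}\right)\wedge\omega_M$ unconditionally, is exactly the simplification the paper intends. Your explicit verification that the mixed terms $\partial_M\overline{\partial}_Q\omega_M$ and $\partial_Q\overline{\partial}_M\omega_M$ vanish via the bidegree of $\sigma_M$, and your flagging that the non-constancy of $h$ is what makes $A$ a proper analytic subset, fill in details the paper leaves tacit without changing the argument.
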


\section{\textbf{Discussion}}

\label{sec_discussion}

By generalizing the twistor $\mathbb{P}^{1}$ to a more general complex manifold $Q$, we constructed a generalization of twistor spaces of hypercomplex manifolds and hyper-K\"{a}hler manifolds $M$. We found that the manifold $X$ constructed in this way is complex if and only if $Q$ admits a holomorphic map to $\mathbb{P}^1$.

We showed that these manifolds and their branched double covers are complex non-K\"{a}hler. We made branched double covers of these manifolds, branching along appropriate divisors. Some of these branched double covers can provide non-K\"{a}hler Calabi-Yau manifolds. If in addition $Q$ is a balanced manifold, the resulting manifold $X$ and its special double cover have balanced Hermitian metrics. We found their explicit Hermitian metrics by methods of positivity.

It may be possible to make blowing-downs of these manifolds, under which
they could become projective. In the context of the twistor spaces for self-dual
manifolds and their branched double covers, these blowing-downs can be
performed, see for example \cite{Poon86, Lebrun Poon, Lin:2014lya}.
Moreover, these geometries can be interesting in understanding the moduli
space of Calabi-Yau manifolds \cite{Reid}.

One can also construct stable vector bundles on them \cite{Donaldson,UY, LY
hermitian}. They are also interesting in the context of string theory. The
balanced manifolds constructed in this paper would be useful for the exploration
of stable vector bundles on them. The existence of the solution to the
Hermitian Yang-Mills equations on these manifolds is expected to be
equivalent to the stability of the vector bundle on them.

The non-K\"{a}hler geometries considered here could be useful for mirror symmetry \cite{Strominger:1996it} in higher dimensions and in non-K\"{a}hler manifolds \cite{Lau:2014fia,Minasian:2016txd}. It may be interesting to identify a subclass of these manifolds in this construction that will be useful for the mirror symmetry.

\end{document}